\DeclareMathOperator{\Hom}{Hom}
\DeclareMathOperator{\End}{End}
\DeclareMathOperator{\Aut}{Aut}
\DeclareMathOperator{\Ker}{Ker}
\DeclareMathOperator{\Cok}{Coker}
\DeclareMathOperator{\Ima}{Im}
\DeclareMathOperator{\inte}{int}
\DeclareMathOperator{\Trd}{Trd}
\DeclareMathOperator{\Br}{Br}
\DeclareMathOperator{\Id}{Id}
\DeclareMathOperator{\Sym}{Sym}
\DeclareMathOperator{\Sim}{Sim}
\DeclareMathOperator{\Iso}{Iso}
\DeclareMathOperator{\rdim}{rdim}
\DeclareMathOperator{\LaxHom}{LaxHom}
\newcommand{\isom}{\stackrel{\sim}{\rightarrow}}
\newcommand{\Isom}{\stackrel{\sim}{\longrightarrow}}
\newcommand{\ie}{\textit{i.e.}}
\newcommand{\Z}{\mathbb{Z}}
\newcommand{\N}{\mathbb{N}}
\newcommand{\pfis}[1]{\langle\!\langle #1\rangle\!\rangle}
\newcommand{\To}{\longrightarrow}
\newcommand{\fdiag}[1]{\langle #1\rangle}
\newcommand{\ens}[2]{\{ #1\,|\, #2\}}
\newcommand{\tld}{\widetilde}
\newcommand{\eps}{\varepsilon}
\newcommand{\bitem}{\item[$\bullet$]}
\newcommand{\pgq}{\geqslant}
\newcommand{\CBrh}[1][K]{\mathbf{Br}_h(#1)}
\newcommand{\CBrhu}{\mathbf{Br}_h(K,\iota)}
\newcommand{\Zd}{\Z/2\Z}
\newcommand{\mcat}[1][R]{#1\text{-}\mathbf{Mod}}
\newcommand{\acat}[1][R]{#1\text{-}\mathbf{Alg}}
\newcommand{\cacat}[1][R]{#1\text{-}\mathbf{CommAlg}}
\newcommand{\mor}[1]{\stackrel{#1}{\rightsquigarrow}}
\renewcommand{\phi}{\varphi}
\renewcommand{\bar}{\overline}
\renewcommand{\hat}{\widehat}
\newcommand{\cat}[1]{\langle #1\rangle}
\newcommand{\foncdef}[5]{\begin{array}{rrcl}
#1 : & #2 & \To & #3 \\
 & #4 & \longmapsto & #5
\end{array}}
\newcommand{\anonfoncdef}[4]{\begin{array}{rcl}
#1 & \To & #2 \\
#3 & \longmapsto & #4
\end{array}}
\newtheorem{thm}{Theorem}[section]
\newtheorem{prop}[thm]{Proposition}
\newtheorem{coro}[thm]{Corollary}
\newtheorem{lem}[thm]{Lemma}
\newtheorem{defi}[thm]{Definition}
\theoremstyle{definition}
\newtheorem{rem}[thm]{Remark}
\newtheorem{ex}[thm]{Example}
\author{Nicolas Garrel}
\title{Mixed Witt rings of algebras with involution}
\date{}
\begin{document}

\maketitle

\textbf{Abstract:} Although there is no natural internal product for hermitian
forms over an algebra with involution of the first kind, we describe how to multiply two
$\eps$-hermitian forms to obtain a quadratic form over the base field. This
allows to define a commutative graded ring structure by taking together bilinear
forms and $\eps$-hermitian forms, which we call the mixed Witt ring of an algebra
with involution. We also describe a less powerful version of this construction
for unitary involutions, which still defines a ring, but with a grading
over $\mathbb{Z}$ instead of the Klein group.

We first describe a general framework for defining graded rings out of monoidal
functors from monoidal categories with strong symmetry properties to categories
of modules. We then give a description of such a strongly symmetric category $\CBrhu$
which encodes the usual hermitian Morita theory of algebras with involutions over a field
$K$.

We can therefore apply the general framework to $\CBrhu$ and the Witt group functors
to define our mixed Witt rings, and derive their basic properties, including explicit formulas for
products of diagonal forms in terms of involution trace forms, explicit computations for the
case of quaternion algebras, and reciprocity formulas relative to scalar extensions.

We intend to describe in future articles further properties of those rings,
such as a $\lambda$-ring structure, and relations with the Milnor conjecture
and the theory of signatures of hermitian forms.

MSC classes: 11E81 (primary), 19G12, 16W10 (secondary).

\pagebreak

\section*{Introduction}

\subsection*{From quadratic forms to hermitian Morita theory}

In the 19th century, quadratic forms were the object of many investigations,
notably by algebraists such as Gauss, Minkowski or Kronecker, but were mostly
given an arithmetic flavour. The birth of the algebraic theory of quadratic forms
over arbitrary fields (of characteristic not $2$) stems from the seminal 1936 paper 
of Ernst Witt (\cite{Wit}), although a completely unnoticed 1907 paper of Dickson 
(\cite{Dick}) had actually already made significant advances (see Scharlau's comment 
in \cite{Sch2}). A key insight of Witt was to shift from the study of individual 
quadratic forms to the study of the structure they form as a whole. He considers the 
set $W(K)$ of isometry classes of quadratic forms over $K$ up to what is now called
\emph{Witt equivalence}, and shows that the direct sum and tensor product of 
quadratic spaces endow this set with a natural commutative ring structure.

Though quadratic forms over fields were still objects of study, notably through
the study of various arithmetic invariants, this algebraic theory of Witt truly
started expanding when it was rediscovered in the mid 60s by Pfister (see \cite{Pfi}),
and others such as Arason. It then became abundantly clear that the algebraic
structure of the Witt ring was a central element of the theory, and was connected
to many subjects of interest, such as Galois cohomology through the Milnor conjecture,
or the theory of field orderings, in relation with the much older Artin-Schreier theory.
For detailed references on the algebraic theory of quadratic forms, we can cite
\cite{Lam}, \cite{Sch} or \cite{EKM}.

Around the same time, algebraic K-theory was being developed, and the connection with
quadratic forms became apparent, as can be seen notably in the foundational
work of Bass (\cite{Bass}). This led to working over general commutative rings,
but also to consider not just the Witt ring $W(K)$ but what we will call the
Grothendieck-Witt ring $GW(K)$ (the terminology "Witt-Grothendieck ring" is equally
common); one crucial reason Witt introduced the Witt equivalence is that it 
creates additive inverses in $W(K)$, but K-theory taught us that it is very natural
to rather add formal inverses. Shortly after that, Bak (\cite{Bak}) and 
Fröhlich-McEvett (\cite{FME}) independently started to generalize bilinear forms over 
commutative rings to $\eps$-hermitian forms over (non-commutative) rings with involution
(where $\eps$ is a parameter, which is simply equal to $\pm 1$ in the classical
context of fields with no involution, corresponding to the theory of symmetric
and anti-symmetric bilinear forms); a more advanced reference is found in \cite{Knu}.
In particular, Fröhlich and McEvett define
Witt and Grothendieck-Witt \emph{groups} $W^\eps(A,\sigma)$ and $GW^\eps(A,\sigma)$;
furthermore, they develop an analogue for hermitian modules of the classical Morita 
theory over rings, which they aptly call hermitian Morita theory. This theory is the
central tool used in this article, and we review it in Section \ref{sec_brau}, although
in a different form. 

Prominent among rings with involutions are the (finite-dimensional) central simple 
algebras with involution over fields, which we will thereafter simply call
"algebras with involution". Their study was initiated by Albert in \cite{Alb}
with the context of Riemann surfaces in mind, but found a renewed interest when
Weil (\cite{Wei}) showed that they can be used to describe the simple algebraic
groups over arbitrary fields. A comprehensive treatment of those algebras and
their hermitians forms is given in the reference monograph \cite{BOI} by
Knus-Merkurjev-Rost-Tignol.

\subsection*{The idea of the mixed Witt ring}

One important caveat in all those generalizations is that when we work over
non-commutative rings we can only speak about (Grothendieck-)Witt \emph{groups} 
and not \emph{rings}. This is simply because there is no tensor product of modules 
over non-commutative rings. Rather, the tensor product of two $A$-modules (for example
on the right)
is an $(A\otimes A)$-module. This being said, there is still some compatibility
between Witt groups and tensor products: if $(A,\sigma)$ and $(B,\tau)$ are
algebras with involution over some field with involution $(K,\iota)$, the
tensor product induces maps
\[ GW^\eps(A,\sigma)\times GW^{\eps'}(B,\tau)\To 
GW^{\eps\eps'}(A\otimes_K B,\sigma\otimes \tau) \]
(here $\eps,\eps'\in K^\times$ are parameters such that $\eps\iota(\eps)=1$). This
is not surprising if one interprets the Grothendieck-Witt group as the degree
$0$ part of hermitian K-theory (see \cite{Bak2} for instance), and it expresses
the fact that (Grothendieck-)Witt groups are \emph{monoidal functors} on some
category (which will be made precise).

In the special case of algebras with involutions of the first kind (so $\iota=\Id$
in the notation above), a remarkable phenomonenon is that not only do the algebras
"have order $2$" with respect to Brauer-equivalence, but we have an explicit canonical
hermitian Morita equivalence between $(A\otimes_K A,\sigma\otimes \sigma)$ and
$(K,\Id)$, which means that by general considerations of Morita theory we get
a canonical isomorphism 
\[  GW(A\otimes_K A,\sigma\otimes\sigma) \Isom GW(K). \]
If we combine this with the natural product
\[ GW^\eps(A,\sigma)\times GW^\eps(A,\sigma) \to GW(A\otimes_K A,\sigma\otimes \sigma) \]
we get a canonical "mixed" product
\[ GW^\eps(A,\sigma)\times GW^\eps(A,\sigma) \to GW(K). \]

In other words, we \emph{can} define the product of two $\eps$-hermitian forms,
but the result is not a hermitian form but rather a quadratic form over the base field.
It is in that sense that we mean the product is "mixed". Using the basic fact that $GW(K)$ 
is a ring and $GW^\eps(A,\sigma)$ is a $GW(K)$-module, this allows us to construct a 
$\Z/2\Z$-graded commutative ring
\[ \tld{GW}^\eps(A,\sigma) = GW(K) \oplus GW^\eps(A,\sigma). \]
Actually, it turns out that for functoriality reasons it is much more convenient
to bundle together hermitian and anti-hermitian forms, and rather consider
the \emph{mixed Grothendieck-Witt ring}
\[ \tld{GW}(A,\sigma) = GW(K) \oplus GW^{-1}(K) \oplus GW^1(A,\sigma) 
    \oplus GW^{-1}(A,\sigma), \]
which is graded over $\Gamma = \Zd\times \mu_2(K)$. Of course, the same construction
holds for Witt groups, giving the \emph{mixed Witt ring}
\[ \tld{W}(A,\sigma) = W(K) \oplus W^1(A,\sigma) \oplus W^{-1}(A,\sigma). \]
A large part of this article is dedicated to showing that these are indeed
well-defined commutative rings, and that they are functorial in the sense that
any hermitian Morita equivalence between algebras with involution induces a graded
ring isomorphism.

It should be mentioned that a very similar construction was made by Lewis in
\cite{Lew}, for the special case where $A$ is a division quaternion algebra,
although many key properties, such as associativity and commutativity, are stated 
without proof. Instead of the involution trace form, Lewis uses the norm form, which is a
special feature of quaternion algebras.

We also present a less powerful construction which has the merit of working even
for involutions of the second kind: in that case $(A,\sigma)$ is not its own inverse
(up to hermitian Morita equivalence); rather its inverse is the twisted algebra
$(A^\iota,\sigma)$, where the action of $K$ on $A$ is twisted by $\iota$. Then we
can define a $\Z$-graded ring $\hat{GW}(A,\sigma)$ where the component of degree
$n$ is $GW(A^{\otimes n},\sigma^{\otimes n})$ if $n\pgq 0$ and 
$GW((A^\iota)^{\otimes (-n)},\sigma^{\otimes (-n)})$ if $n< 0$; we can "cancel out"
the $A$ and $A^\iota$ as many times as necessary, so that the product of
$x\in GW(A^{\otimes n},\sigma^{\otimes n})$ and 
$y\in GW((A^\iota)^{\otimes m},\sigma^{\otimes m})$ is in 
$GW(A^{\otimes n-m},\sigma^{\otimes n-m})$ if $n\pgq m$, and in
$GW((A^\iota)^{\otimes m-n},\sigma^{\otimes m-n})$ otherwise.

\subsection*{Content of the article}

Our approach in defining and studying our mixed rings is to highlight as much as
possible the formal aspects of the construction: it is a consequence of general
properties of symmetric monoidal categories, applied to a certain category which
encodes hermitian Morita theory. In consequence, the first part is dedicated to
general considerations on monoidal categories. 

Sections \ref{sec_mono} and  \ref{sec_sym} are a basic
introduction to symmetric monoidal categories, as can be found in any reference
on categories, such as the classic \cite{Mac}. They are here for convenience of readers
who are unfamiliar with that theory, as well as to fix some notations. 

Section \ref{sec_tors} contains the technical heart of the article: we explain how a special 
property of a symmetric monoidal category, which we call \emph{strong symmetry} 
(Definition \ref{def_strong}), allows to coherently choose inverses of objects 
(Theorem \ref{thm_inverses}), 
and to coherently handle $n$-torsion of objects (Proposition \ref{prop_torsion}). The strong symmetry 
property was introduced in \cite{Ulb}, where it was already used to get a similar result 
about inverses; our
result is basically a reformulation of \cite[4.6,4.7]{Lap}. This being said, the presentation we
give in terms of monoidal functors $\cat{\Z}\to \mathbf{C}$ is our own, and we do
believe that it sheds an interesting light on those results, by constructing a
universal category $\mathbf{C}^\times$ of inverses in $\mathbf{C}$, and defining
the relevant structure as a functor $\mathbf{C}\to \mathbf{C}^\times$ which
"coherently chooses inverses". There are many variations in the literature around
such "enhanced group structures" on monoidal categories (see \cite{Bae} for more
references), and so we give yet another one. On the other hand, Proposition \ref{prop_torsion},
which adapts the previous idea to handle $n$-torsion with monoidal functors
$\cat{\Z/n\Z}\to \mathbf{C}$ is our own, though the basic idea of the proof is
very reminiscent of \cite{JS}.

In Section \ref{sec_grad}, we explain how to construct graded rings (which is our end goal)
from monoidal functors. In fact, we explain that an $M$-graded ring is essentially
the same as a lax monoidal functor from the discrete category $\cat{M}$ to the
category of abelian groups. It is a very simple idea, but we have not found previous
uses in the literature. We give slightly
refined versions of that statement which will be adapted to the construction of our
mixed rings, in relation with the structures described in the previous section
(Corollary \ref{cor_graded_functors}).
\\

The second part is dedicated to the presentation of our version of hermitian Morita
equivalence (in the framework of algebras with involution), which has a strong
categorical flavour, so that we can combine it later with the result of the
first part.

Section \ref{sec_herm} exposes the basic theory of hermitian forms over algebras with involution,
mostly to fix the terminology and notations. It does not contain anything original
beyond some notation, and all the material can be found in \cite{BOI} and \cite{Knu}.
Particular attention should be given to Example \ref{ex_inv_trace}, which presents the canonical
Morita equivalence alluded to earlier and which is the basis of our ring construction;
it was already introduced in \cite{FME}.

In Section \ref{sec_cbrh}, we explain how to package hermitian Morita theory in a category
$\CBrhu$. The only thing new is the presentation, as the actual mathematical results
can all be found in \cite{Knu}, and actually already in \cite{FME}. The notion that
(classical) Morita theory could be elegantly expressed in a certain $2$-category whose
morphisms are bimodules is now rather classical in the field of "higher algebra"
(see for instance \cite{Dus}) and has been generalized in all sorts of direction
(for example \cite{Vit}), but to our knowledge this is the first time the hermitian
version is explicitly written down in practical terms. Note that indeed the natural
framework would be to consider a $2$-category, where the $2$-morphisms are bimodule
morphisms, but it is not needed for what follows, so to avoid unnecessary complications
we "truncate" the natural $2$-category to a plain category, at the cost of having
morphisms be \emph{isometry classes} of hermitian bimodules. Since the non-hermitian
version is sometimes called the \emph{Brauer $2$-group} of a commutative ring, we
named this category the \emph{hermitian Brauer $2$-group}.

In Section \ref{sec_autom}, we investigate the connection between the usual automorphisms of
an algebra with involution and its hermitian Morita self-equivalences. In particular,
Proposition \ref{prop_autom} shows that an algebraic automorphism induces the trivial Morita
automorphism if and only if it is inner.

This is used in Section \ref{sec_gold} to show that $\CBrhu$ is strongly symmetric (Corollary
\ref{cor_strong_sym}); the key
point is the existence of the ubiquitous \emph{Goldman element} in a central simple
algebra, since it implies that the switching automorphism of $A\otimes_K A$ is actually
inner. This is the crucial result upon which our construction relies, and although
ultimately quite simple, it seems to have gone unnoticed until now.

We show in Section \ref{sec_tors_cbrh} that for involutions of the first kind, the canonical 
equivalence defined by the involution trace form does define a coherent $2$-torsion
structure on $\CBrh$ (Theorem \ref{thm_torsion}).
\\

The third part of the article applies the general results of the first part to the
category $\CBrhu$ to define our mixed rings.

Section \ref{sec_group} simply defines the underlying (Grothendieck-)Witt groups, and checks
that they are functorial with respect to $\CBrhu$. In order to make them truly
functorial as graded objects, a harmless change of labelling is needed (the issue
being that anti-hermitian equivalences reverse the signs of hermitian forms instead
of preserving them).

In Section \ref{sec_ring}, we establish that those functors are actually monoidal, and thus
that the whole machinery of Section \ref{sec_grad} can be used. This leads to the definition
of the rings $\tld{GW}(A,\sigma)$ and $\tld{W}(A,\sigma)$ (as well as the less
interesting $\hat{GW}(A,\sigma)$ and $\hat{W}(A,\sigma)$ for unitary involutions).

Section \ref{sec_dim} presents how to handle the (reduced) dimension of hermitian modules,
in a way compatible with our graded ring structure.

In order to perform explicit computations in our rings, we describe in Section \ref{sec_diag}
how to multiply diagonal forms; it turns out the result is given by the twisted
involution trace forms introduced in \cite[Â§11]{BOI} (Proposition \ref{prop_prod_diag}). Whether 
or not this actually makes the product explicit is a matter of opinion, as those forms 
are actually quite difficult to compute in practice.

There is at least a case where we can reasonably say that the products are fully
computable, namely the case of a quaternion algebra, endowed with its canonical
involution. We work this out in Section \ref{sec_quater} (Proposition \ref{prop_prod_quater}).

Finally, the last Section \ref{sec_recip}, which is the most technical one of the third
part, is dedicated to scalar extensions, and especially to establishing a Frobenius
reciprocity formula (compare Theorem \ref{thm_recipr} with \cite[2.5.6]{Sch}).

\subsection*{Perspectives}

It should be noted that while in this article we work with algebras over fields,
basically everything carries over to Azumaya algebras with involution over
general commutative rings. The key ingredient of Corollary \ref{cor_strong_sym} will still
hold in that general context thanks to the existence of the Goldman element.
It can even be envisioned to work with Azumaya algebras over schemes, or even
locally ringed topos, as studied in \cite{FW}.

In addition to extending the algebras, we could also extend Grothendieck-Witt
rings to the whole hermitian K-theory, and actually to simple algebraic K-theory
if we forget about the involutions. The fact that such extensions could be easily
handled with very little extra legwork is a compelling argument for our general abstract 
presentation of the mechanisms at play as properties of monoidal categories.

Even considering simply algebras with involutions over fields, we intend to 
build on the present article in future work to study, among other things: a
$\lambda$-ring structure on $\tld{GW}(A,\sigma)$ (the existence of which is a strong
advantage over $\tld{W}(A,\sigma)$); an extension of Artin-Schreier theory for
the spectrum of $\tld{W}(A,\sigma)$ (related to the work of Astier and Unger in 
\cite{AU2}); applications of our structure to the construction of cohomological
invariants of algebras with involution and algebraic groups.

\subsection*{Aknowledgements}

This work was partially supported by a grant from the Simons Foundation, during
a stay at the Isaac Newton Institute. This work was also supported by the Fonds Wetenschappelijk
Onderzoek – Vlaanderen (FWO) in the FWO Odysseus Programme (project ‘Explicit Methods in
Quadratic Form Theory’, G0E6114N), the Bijzonder Onderzoeksfonds (BOF), University of Antwerp
(project BOF-DOCPRO-4, 2865).

\subsection*{Preliminaries and conventions}

We fix a base field $K$ of characteristic not 2, and we identify symmetric
bilinear forms and quadratic forms over $K$, through $b\mapsto q_b$ with
$q_b(x)=b(x,x)$. Diagonal quadratic forms are denoted $\fdiag{a_1,\dots,a_n}$,
with $a_i\in K^*$, and $\pfis{a_1,\dots,a_n}$ is the $n$-fold Pfister form
$\fdiag{1,-a_1}\otimes \cdots \otimes \fdiag{1,-a_n}$. We always assume that 
bilinear forms are nondegenerate.

All rings are associative and with unit, and ring morphisms preserve the unit.
The group of invertible elements of a ring $A$ is denoted $A^\times$.
The opposite ring of $A$ (that is, the ring with the reversed product)
is denoted $A^{op}$.
Unless otherwise specified, modules are by default modules on the \emph{right}, and
are assumed to be non-zero. Every $K$-algebra and every module over such
an algebra are required to have finite dimension over $K$. 
If $A$ and $B$ are $K$-algebras, a $B$-$A$-bimodule is always supposed to be over
$K$, meaning that the right and left actions of $K$ on $V$ coincide.
If $A$ is a central simple algebra over $K$, we write $\Trd_A:A\to K$ for the reduced
trace of $A$.

We fix an automorphism $\iota$ of $K$ such that $\iota^2=\Id$, and we 
let $k=K^\iota$ be the subfield of fixed points. If $\iota=\Id$ then 
$K=k$, but if $\iota\neq \Id$ then $K/k$ is a quadratic extension.
Note that we do not include the split case $K=k\times k$, to avoid having
to discuss it separately. The reader is encouraged to check that everything
would still work in that context.

When we say that $(A,\sigma)$ is an algebra with involution over $(K,\iota)$, we mean
that $A$ is a central simple algebra over $K$, and that $\sigma$ is an involution
on $A$ with $\sigma_{|K}=\iota$, so $\sigma$ is a $k$-algebra anti-automorphism
of $A$, with $\sigma^2=\Id_A$. 

Recall that the involution $\sigma$ is of the first kind
when $\iota= \Id$, and is of the second kind (or unitary) otherwise. An involution
of the first kind is orthogonal if $\dim_K(\Sym(A,\sigma)) = n(n+1)/2$
where $n$ is the degree of $A$, and it is symplectic if $\dim_K(\Sym(A,\sigma))=n(n-1)/2$. 
In particular, $(K,\Id)$ is an algebra with orthogonal involution. A quaternion algebra 
admits a unique symplectic involution, called its canonical involution.

We set $U(K,\iota) = 
\{\eps\in K^\times|\eps\iota(\eps)=1\}$; it is a subgroup of $K^\times$,
and when $\iota=\Id$ it is simply $\mu_2(K)$. If $\eps\in U(K,\iota)$, we define 
$\Sym^\eps(A,\sigma)$ as the set of $\eps$-symmetric elements of $A$, which satisfy 
$\sigma(a)=\eps a$. We also write $\Sym^\eps(A^\times,\sigma)$ for the set of invertible 
$\eps$-symmetric elements. 

If $L/K$ is any field extension, and $X$ is an object (algebra, module, involution,
hermitian form, etc.) over $K$, then $X_L$ is the corresponding object over $L$,
obtained by base change.

A semi-group is a set endowed with an associative binary product (so the difference
to a monoid is that the semi-group might lack an identity element). The Grothendieck 
group $G(S)$ of $S$ is the universal solution to the problem of finding a morphism $S\to G$ 
where $G$ is a group.



\section{Monoidal categories and graded rings}\label{sec_cat}

This section serves both as a quick primer (or reminder) for readers who
might be unfamiliar with monoidal categories, and as a presentation of the specific
methods used in this article to produce graded rings from those categories. We do assume
familiarity with category theory in general. We state most of the general theory without
proof, and refer to \cite{Mac} for more details.

\subsection{Monoidal categories}\label{sec_mono}

A monoidal category $(\mathbf{C},\otimes,I)$ (more precisely 
$(\mathbf{C},\otimes,I,\alpha,\lambda,\rho)$) consists of the data of: 
\begin{itemize}
  \item a category $C$;
  \item a bifunctor $\otimes: \mathbf{C}\times \mathbf{C}\to \mathbf{C}$ 
  (\ie the expression $x\otimes y$ is functorial in each variable);
  \item a distinguished \emph{unit object} $I\in \mathbf{C}$;
  \item for each triple $x,y,z\in \mathbf{C}$, a natural isomorphism 
  $\alpha_{x,y,z}: x\otimes (y\otimes z) \Isom (x\otimes y)\otimes z$ 
  called \emph{associator};
  \item for each object $x\in \mathbf{C}$, a natural isomorphism 
  $\lambda_x: I\otimes x\Isom x$ called \emph{left unitor};
  \item for each object $x\in \mathbf{C}$, a natural isomorphism
  $\rho_x: x\otimes I\Isom x$ called \emph{right unitor}.
\end{itemize}

The fact that the product is bifunctorial means that for any morphisms $f:x\to y$
and $g:x'\to y'$ we get a well-defined $f\otimes g: x\otimes x'\to y\otimes y'$.

Those various isomorphisms are required to satisfy certain coherence laws relating
one another: the triangle diagram
\[ \begin{tikzcd}
  x\otimes (I\otimes y) \drar[swap]{\Id_x\otimes \lambda_y} \arrow[rr, "\alpha_{x,I,y}"]
  & & (x\otimes I)\otimes y \dlar{\rho_x\otimes \Id_y} \\
   & x\otimes y &
\end{tikzcd} \]
and the pentagon diagram
\[ \begin{tikzcd}[column sep=large]
  x\otimes(y\otimes(z\otimes t)) \rar{\alpha_{x,y,z\otimes t}} \dar{\Id_x\otimes \alpha_{y,z,t}} &
  (x\otimes y)\otimes (z\otimes t) \rar{\alpha_{x\otimes y,z,t}} &
  ((x\otimes y)\otimes z) \otimes t \\
  x\otimes ((y\otimes z)\otimes t) \arrow[rr,"\alpha_{x,y\otimes z,t}"] & &
  (x\otimes (y\otimes z))\otimes t. \uar{\alpha_{x,y,z}\otimes \Id_t}
\end{tikzcd} \]

The associators and unitors are very much part of the data, although in most
natural examples they are the "obvious" isomorphisms, and are rarely actually
spelled out in practice. Furthermore, MacLane's coherence theorem (\cite[VII.2]{Mac}) guarantees
that under the above axioms there is no ambiguity arising if we omit all parentheses
and simplify all products with $I$, as any two identifications 
between two different expressions using associators and unitors will actually be equal.
Therefore, we will use a more relaxed style of notation, and leave all associators/unitors
completely in the background. In particular, we allow ourself to write $x^{\otimes n}$ for
any object $x$ and any $n\in \N$ (with the convention $x^{\otimes 0} = I$). We also often simply
say that "$\mathbf{C}$ is a monoidal category" when the notations are clear.

\begin{ex}
  If $R$ is a commutative ring, then $(\mcat,\otimes_R,R)$ and 
  $(\acat,\otimes_R,R)$ are monoidal categories.
\end{ex}

\begin{ex}
  If $M$ is a monoid, then the discrete category $\cat{M}$ with $M$
  as its underlying set is canonically a monoidal category, the tensor product of objects
  corresponding to the product in $M$.
\end{ex}

If $(\mathbf{C},\otimes, I)$ and $(\mathbf{D},\otimes, J)$ are monoidal categories,
a \emph{(lax) monoidal functor} from $\mathbf{C}$ to $\mathbf{D}$ consists of the data of:
\begin{itemize}
  \item a functor $F: \mathbf{C}\to \mathbf{D}$;
  \item a morphism $J\to F(I)$;
  \item for each $x,y\in \mathbf{C}$, a morphism $F(x)\otimes F(y)\to F(x\otimes y)$.
\end{itemize}

Those morphisms are once again required to satisfy some coherence laws, which we will
not spell out here, in the same spirit as the ones above. When those morphisms are 
isomorphisms, the monoidal functor is called \emph{strong} (if they are equalities, it is called 
\emph{strict}). Again, the structural morphisms in the definition are often "the obvious ones"
and are often left unnamed in practice.

\begin{ex}
  If $R\to S$ is a commutative ring morphism, then the scalar extension functors
  $\mcat\to \mcat[S]$ and $\acat\to \acat[S]$ have an obvious strong monoidal structure.
\end{ex}

\begin{ex}
  Any monoid morphism $M\to N$ induces in an obvious way a strict monoidal functor
  $\cat{M}\to \cat{N}$.
\end{ex}

A natural transformation $\phi: F\to G$ between two monoidal functors $\mathbf{C}\to \mathbf{D}$
is called \emph{monoidal} if it satisfies the commutative diagrams
\[ \begin{tikzcd}
  & J \dlar{} \drar{} & \\
  F(I) \arrow[rr, "\phi_I"] & & G(I)
\end{tikzcd} \]
and
\[ \begin{tikzcd}
  F(x)\otimes F(y) \dar \rar{\phi_x\otimes \phi_y} & G(x)\otimes G(y) \dar \\
F(x\otimes y) \rar{\phi_{x\otimes y}} & G(x\otimes y).
\end{tikzcd}  \]

Note that while being a monoidal category and being a monoidal functor are \emph{structures} put
on top of categories/functors, being a monoidal natural transformation is a \emph{property}.

Monoidal functors and natural transformations can be composed in the obvious way (the behaviour
of the structural morphisms in those compositions is straightforward). In particular, we
get the expected notions of monoidally isomorphic monoidal functors, and of monoidally equivalent
categories (in fact, monoidal categories form a $2$-category, so all the usual notions 
can apply without change). The composition of two strong (resp. strict) monoidal functors
is again strong (resp. strict).

\begin{rem}
  It is well-known (see for instance \cite[1.5.3]{EGNO}) that a strong monoidal functor is
  a monoidal equivalence if and only if it is an equivalence (in other words, if it has
  a quasi-inverse, then it also has a monoidal quasi-inverse).
\end{rem}

Given two (small) monoidal categories $\mathbf{C}$ and $\mathbf{D}$, we define 
$\Hom_{\otimes}(\mathbf{C},\mathbf{D})$ (resp. $\LaxHom_{\otimes}(\mathbf{C},\mathbf{D})$) 
as the category of strong (resp. lax) monoidal functors between the two, with monoidal 
natural transformations as morphisms. Note that any
$F\in \Hom_{\otimes}(\mathbf{C},\mathbf{D})$ is canonically isomorphic to some $F'$
such that the structural isomorphism $J\to F'(I)$ is actually the identity, and
the structural isomorphisms $F'(I)\otimes F'(x)\to F(I\otimes x)\isom F(x)$ and
$F'(x)\otimes F'(I)\to F(x\otimes I)\isom F(x)$ are then identified with the unitors
in $\mathbf{D}$. Therefore we will usually implicitly restrict to those functors, which
brings no noticeable change to the theory (they form a full subcategory of 
$\Hom_{\otimes}(\mathbf{C},\mathbf{D})$ such that the inclusion functor is an equivalence,
with a canonical quasi-inverse).

\begin{ex}
  If $M$ and $N$ are monoids, then $\Hom_{\otimes}(\cat{M},\cat{N})$
  is identified with the set of monoid morphisms $M\to N$.
\end{ex}

\subsection{Symmetric monoidal categories}\label{sec_sym}

If $(\mathbf{C},\otimes,I)$ if a monoidal category, a \emph{symmetric structure} on
$\mathbf{C}$ is the data of an isomorphism natural in $x$ and $y$
\begin{equation}
  s_{x,y}: x\otimes y \Isom y\otimes x
\end{equation}
for all $x,y\in \mathbf{C}$, which we call the switching morphism (also called ``symmetry
isomorphism''), satisfying some coherence 
axioms so that it is compatible with associators and unitors, and very importantly the 
involution axiom stating that
\begin{equation}
  s_{y,x}\circ s_{x,y} = \Id_{x\otimes y},
\end{equation}
in other words the composition
\[ x\otimes y\xrightarrow{s_{x,y}} y\otimes x \xrightarrow{s_{y,x}} x\otimes y \]
is the identity of $x\otimes y$ (without this last axiom, the category is only called
\emph{braided}).

In a symmetric monoidal category, for any objects $x_1,\dots,x_n\in \mathbf{C}$ and
any permutation $g\in \mathfrak{S}_n$, there is a well-defined induced isomorphism
\begin{equation}
  g_*: x_1\otimes\dots \otimes x_n \Isom x_{g^{-1}(1)}\otimes\dots\otimes x_{g^{-1}(n)},
\end{equation}
which can be described by applying a switching morphism for each transposition, and in particular 
there is a canonical group morphism
\begin{equation}
  \mathfrak{S}_n\to \Aut_{\mathbf{C}}(x^{\otimes n})
\end{equation}
for each object $x\in \mathbf{C}$ (in a braided category, we get a morphism from the braid
group instead, hence the name).

\begin{ex}
  Categories of modules or algebras are canonically symmetric, the switching morphisms 
  being the obvious ones.
\end{ex}

\begin{ex}
  If $M$ is a monoid, the monoidal category $\langle M\rangle$ has a symmetric structure,
  necessarily unique, if and only if $M$ is commutative.
\end{ex}

A monoidal functor $F$ between two symmetric monoidal categories $\mathbf{C}$ and $\mathbf{D}$ 
is called \emph{symmetric} (this time it is a property and not a structure) if the following 
diagram commutes for all $x,y\in \mathbf{C}$:
\begin{equation}\label{diag_sym_funct}
\begin{tikzcd} 
  F(x)\otimes F(y) \rar{s_{F(x),F(y)}} \dar & F(y)\otimes F(x) \dar \\
  F(x\otimes y) \rar{F(s_{x,y})} & F(y\otimes x).
\end{tikzcd}
\end{equation}

\begin{ex}
  The scalar extension functors on categories of modules or algebras are symmetric
  monoidal.
\end{ex}

A "symmetric monoidal natural transformation" is nothing but a monoidal natural transformation
between two symmetric monoidal functors (there is no extra condition involving the symmetric
structure). Therefore we can drop the adjective "symmetric" in that case.

The composition of symmetric monoidal functors is symmetric monoidal (there is again a
$2$-category of symmetric monoidal categories). We adapt the earlier notation, and write
$\Hom_{\otimes}^s(\mathbf{C},\mathbf{D})$ (resp. $\LaxHom_{\otimes}^s(\mathbf{C},\mathbf{D})$) 
for the full subcategory of $\Hom_{\otimes}(\mathbf{C},\mathbf{D})$ (resp.
$\LaxHom_{\otimes}(\mathbf{C},\mathbf{D})$) consisting of the symmetric monoidal functors 
(provided that $\mathbf{C}$ and $\mathbf{D}$ have a symmetric structure, of course).

\begin{rem}
  Given a symmetric strong monoidal functor between two symmetric monoidal categories, if 
  it admits a quasi-inverse, then it admits a \emph{symmetric} monoidal quasi-inverse.
\end{rem}

The set of functions from any set $X$ to some monoid $M$ has
an obvious pointwise monoid structure, but if $X$ is a monoid itself then the subset
of monoid morphisms is in general \emph{not} a submonoid, unless $M$ is commutative. Likewise,
for any (small) category $\mathbf{C}$ and any (small) monoidal category $\mathbf{D}$, the category 
of functors $\mathrm{Fun}(\mathbf{C},\mathbf{D})$ has a natural "pointwise" monoidal structure, 
with unit the constant functor to the unit object of $\mathbf{D}$. If $\mathbf{C}$ is itself
monoidal, then there is a natural forgetful functor
$\Hom_{\otimes}(\mathbf{C},\mathbf{D})\to \mathrm{Fun}(\mathbf{C},\mathbf{D})$, but
$\Hom_{\otimes}(\mathbf{C},\mathbf{D})$ is not monoidal; however, if $\mathbf{D}$ is symmetric monoidal, then 
$\Hom_{\otimes}(\mathbf{C},\mathbf{D})$ becomes a (symmetric) monoidal category, $\mathrm{Fun}(\mathbf{C},\mathbf{D})$
is naturally symmetric, and
$\Hom_{\otimes}(\mathbf{C},\mathbf{D})\to \mathrm{Fun}(\mathbf{C},\mathbf{D})$ is symmetric monoidal.
To be precise, given two monoidal functors $F$ and $G$ in $\Hom_{\otimes}(\mathbf{C},\mathbf{D})$,
the functor $F\otimes G: x\mapsto F(x)\otimes G(x)$ is always well-defined in $\mathrm{Fun}(\mathbf{C},\mathbf{D})$
(and this does not use that $F$ and $G$ are monoidal),
but when $\mathbf{D}$ is symmetric, we give it a monoidal structure by
\[ (F(x)\otimes G(x))\otimes (F(y)\otimes G(y)) \Isom 
(F(x)\otimes F(y))\otimes (G(x)\otimes G(y)) \to F(x\otimes y)\otimes G(x\otimes y) \]
where we use the symmetric structure in the first arrow. Therefore we may speak about
the monoidal functor $F\otimes G\in \Hom_{\otimes}(\mathbf{C},\mathbf{D})$. Note that 
$\Hom_{\otimes}^s(\mathbf{C},\mathbf{D})$ is also then a (symmetric) monoidal subcategory
of $\Hom_{\otimes}(\mathbf{C},\mathbf{D})$.

\subsection{Strong symmetry, inverses and torsion}\label{sec_tors}

For any monoid $M$, the set of monoid morphisms $N\to M$ has a special interpretation
when $N=\N$, $\Z$ or $\Z/n\Z$: it gives respectively $M$, the set $M^\times$ of invertible
elements, and the set $M[n]\subset M^\times$ of invertible elements of order dividing $n$.
When $M$ is commutative, those identifications are compatible with the natural monoid
structure on the set of morphisms $N\to M$. 

We extend those considerations to monoidal categories; an additional layer of difficulty
comes from the fact that one may consider either monoidal or \emph{symmetric} monoidal
functors.

\begin{prop}
  Let $(\mathbf{C},\otimes,I)$ be a monoidal category. Then the canonical functor
  $\Hom_{\otimes}(\langle \N\rangle, \mathbf{C})\to \mathbf{C}$ defined by 
  $F\mapsto F(1)$ is an equivalence of categories. A quasi-inverse is given by sending
  $x\in \mathbf{C}$ to the obvious monoidal functor $\Phi_x:\langle \N\rangle\to \mathbf{C}$
  such that $\Phi_x(n)=x^{\otimes n}$.

  Furthermore, when $\mathbf{C}$ is symmetric, this becomes a symmetric monoidal
  equivalence if $\Hom_{\otimes}(\langle \N\rangle, \mathbf{C})$ is endowed with its
  natural symmetric monoidal structure.
\end{prop}

\begin{proof}
  The only thing to check for the equivalence is that if $F: \langle \N\rangle\to \mathbf{C}$ 
  is a strong monoidal functor, then $\Phi_{F(1)}$ is monoidally isomorphic to $F$, which is 
  immediate from the axioms of monoidal functors.

  The fact that this equivalence is symmetric monoidal when $\mathbf{C}$ is symmetric is clear given
  the definition of the symmetric monoidal structure on 
  $\Hom_{\otimes}(\langle \N\rangle, \mathbf{C})\to \mathbf{C}$.
\end{proof}

\subsubsection*{Strong symmetry}

If $\mathbf{C}$ is symmetric and we want to characterize similarly 
$\Hom_{\otimes}^s(\langle \N\rangle, \mathbf{C})$, we need a definition:

\begin{defi}\label{def_strong}
  Let $(\mathbf{C},\otimes,I)$ be a symmetric monoidal category. An object
  $x\in \mathbf{C}$ is called strongly symmetric if the switching map $s_{x,x}$
  is the identity of $x\otimes x$. 
  
  We write $\mathbf{C}^{ss}$ for the full subcategory of $\mathbf{C}$ consisting of 
  the strongly symmetric elements; we say that $\mathbf{C}$ is strongly symmetric
  if $\mathbf{C}^{ss}=\mathbf{C}$.
\end{defi}

The definition is equivalent to requiring that the canonical group morphism
$\mathfrak{S}_n\to \Aut_{\mathbf{C}}(x^{\otimes n})$ is trivial for $n=2$
(and thus for all $n$). 

\begin{ex}
  If $M$ is a commutative monoid, then $\langle M\rangle$ is strongly symmetric.
\end{ex}

\begin{lem}\label{lem_isom_ss}
  Let $(\mathbf{C},\otimes,I)$ be a symmetric monoidal category. If $x\in \mathbf{C}$
  is isomorphic to a strongly symmetric element, it is strongly symmetric.
\end{lem}

\begin{proof}
  Let $f: x\Isom y$ with $y$ strongly symmetric.
  Since the switching morphism is natural in each variable, the following
  diagram commutes:
  \[ \begin{tikzcd}
    x\otimes x \rar{s_{x,x}} \dar[swap]{f\otimes f} & x\otimes x \dar{f\otimes f} \\
    y\otimes y \rar{s_{y,y}} & y\otimes y.
  \end{tikzcd} \]
  Since $s_{y,y}$ is the identity, so is $s_{x,x}$.
\end{proof}

\begin{lem}\label{lem_im_ss}
  Let $M$ be a commutative monoid, and $\mathbf{C}$ a symmetric monoidal category. If
  $F\in \Hom_{\otimes}^s(\langle M\rangle, \mathbf{C})$, then for any $x\in M$ the object
  $F(x)\in \mathbf{C}$ is strongly symmetric.
\end{lem}

\begin{proof}
  Since $F$ is symmetric, we get the commutative diagram (\ref{diag_sym_funct})
  (with $y=x$). The vertical arrows are equal isomorphisms because $F$ is strongly symmetric,
  and the bottom horizontal arrow is the identity because $x$ is strongly symmetric;
  therefore, the top horizontal arrow is also the identity, which exactly means
  that $F(x)$ is strongly symmetric.
\end{proof}

When $M=\N$, there is a form of converse:

\begin{prop}
  Under the equivalence $\Hom_{\otimes}(\langle \N\rangle, \mathbf{C})\simeq \mathbf{C}$,
  the essential image of the subcategory $\Hom_{\otimes}^s(\langle \N\rangle, \mathbf{C})$
  is exactly $\mathbf{C}^{ss}$.
\end{prop}

\begin{proof}
  If $x\in \mathbf{C}^{ss}$, then the canonical $\Phi_x: n\mapsto x^{\otimes n}$ is clearly
  symmetric, so $x$ is in the essential image. Conversely, if $x$ is in the essential
  image, it is isomorphic to some $F(1)$, which is strongly symmetric by Lemma \ref{lem_im_ss},
  so $x$ is strongly symmetric by Lemma \ref{lem_isom_ss}.
\end{proof}

In particular, we see that $\mathbf{C}^{ss}$ is a \emph{monoidal} subcategory of
$\mathbf{C}$ (which can be shown directly).

\subsubsection*{Inverses}

\begin{defi}
  Let $\mathbf{C}$ be a monoidal category. We define its category of inverses $\mathbf{C}^\times$
  as $\Hom_{\otimes}(\langle\Z\rangle, \mathbf{C})$. If $\mathbf{C}$ is symmetric, its category 
  of symmetric inverses $\mathbf{C}^{\times,s}$ is $\Hom_{\otimes}^s(\langle\Z\rangle, \mathbf{C})$.
\end{defi}

There is an obvious functor $\mathbf{C}^\times\to \mathbf{C}$ sending $F$ to $F(1)$.
Clearly, if $F\in \mathbf{C}^\times$, $x=F(1)$ and $\bar{x}=F(-1)$
must be "weak inverses", in the sense that $x\otimes \bar{x}\simeq \bar{x}\otimes x\simeq I$.
If one unfolds the definition of $F$, one realizes it is exactly the data of an "adjoint
equivalence" $(x,\bar{x},\phi,\psi)$ where $\phi: x\otimes \bar{x}\isom I$ and 
$\psi: \bar{x}\otimes x\isom I$ fit into some commutative diagrams:
\[ \begin{tikzcd}
  x\otimes \bar{x}\otimes x  \dar[swap]{\Id_x\otimes \psi} 
  \rar{\phi\otimes \Id_x} & I \otimes x \dar  \\
  x\otimes I \rar & x
\end{tikzcd}
\quad \text{and} \quad 
\begin{tikzcd}
  \bar{x}\otimes x\otimes \bar{x}  \dar[swap]{\psi\otimes \Id_{\bar{x}}}   
  \rar{\Id_{\bar{x}}\otimes \phi} &  I \otimes x \dar\\
  \bar{x}\otimes I \rar & \bar{x}.
\end{tikzcd} \]

It turns out that it is enough to satisfy one of them (\cite{Bae}). This kind of data
is well-known in the literature, and can be considered as a sort of "coherent inverse"
of $x$. In particular, it follows immediately that the choice of a functor 
$\mathbf{C}\to \mathbf{C}^\times$ such that the composition 
$\mathbf{C}\to \mathbf{C}^\times\to \mathbf{C}$ is isomorphic to the identity
(which of course can only exist if all objects in $\mathbf{C}$ are weakly invertible)
is equivalent to the choice of a "group structure" (or "gs-category" structure) on 
$\mathbf{C}$ in the sense of \cite{Lap} (see also \cite{Ulb}). It corresponds to a coherent
(functorial) choice of inverses for all objects. In that spirit, $\mathbf{C}^\times$ is
a category which embodies all possible such choices, in a canonical way.

Similarly, if $\mathbf{C}$ is symmetric, a symmetric strong monoidal functor 
$\mathbf{C}\to \mathbf{C}^{\times,s}$ such that the composition 
$\mathbf{C}\to \mathbf{C}^{\times,s}\to \mathbf{C}$ is monoidally isomorphic to the identity
is the same as an "abelian gs-category" structure in the sense of \cite{Lap}. Laplaza
shows the following in \cite[4.6,4.7]{Lap}:

\begin{thm}[Laplaza]\label{thm_inverses}
  Let $\mathbf{C}$ be a monoidal category with every object weakly invertible. Then
  if we choose for each $x\in \mathbf{C}$ a weak inverse $\bar{x}$ and an isomorphism
  $x\otimes \bar{x}\isom I$, there is a unique way to extend that to a gs-category structure
  $\mathbf{C}\to \mathbf{C}^\times$.

  Furthermore, it defines an abelian gs-structure $\mathbf{C}\to \mathbf{C}^{\times,s}$
  if and only if $\mathbf{C}$ is strongly symmetric.
\end{thm}


\subsubsection*{Torsion}

\begin{defi}
  Let $\mathbf{C}$ be a symmetric monoidal category. Then for any $n\pgq 2$ we define 
  the symmetric monoidal category $\mathbf{C}[n]$ as 
  $\Hom_{\otimes}^s(\langle\Z/n\Z\rangle, \mathbf{C})$. We call it the $n$-torsion
  category of $\mathbf{C}$.
\end{defi}

Of course we could have looked at the intermediary step where we only consider 
$\Hom_{\otimes}(\langle\Z/n\Z\rangle, \mathbf{C})$ for any monoidal $\mathbf{C}$, but
we won't need it, and already in the case of ordinary groups the notion of $n$-torsion
is much better behaved in a commutative context.

To any $F\in \mathbf{C}[n]$ is associated a choice of isomorphism 
$\phi_x: x^{\otimes n}\isom I$ satisfying some coherence conditions, where $x=F(1)$. 
In particular, $x$ must have weak $n$-torsion, meaning that $x^{\otimes n}\simeq I$.

As before, there is a natural symmetric strong monoidal functor $\mathbf{C}[n]\to \mathbf{C}$,
and we are interested in the choice of a symmetric strong monoidal functor 
$\mathbf{C}\to \mathbf{C}[n]$ such that $\mathbf{C}\to \mathbf{C}[n]\to \mathbf{C}$ is 
monoidally isomorphic to the identity. We call such a choice a "coherent $n$-torsion" structure
on $\mathbf{C}$. Clearly it can only exist if $\mathbf{C}$ is strongly symmetric and
all elements have weak $n$-torsion. Unfortunately, this is no longer sufficient, but
we do have:

\begin{prop}\label{prop_torsion}
  Let $\mathbf{C}$ be a symmetric monoidal category. 
  If $x\in \mathbf{C}$ is strongly symmetric, any choice of isomorphism
  $\phi_x: x^{\otimes n}\isom I$ extends to a unique $F\in \mathbf{C}[n]$.

  Furthermore, if $\mathbf{C}$ is strongly symmetric, a coherent $n$-torsion structure
  is equivalent to the choice of such a $\phi_x$ for all $x\in \mathbf{C}$, such that
  for all $x,y\in \mathbf{C}$ we have a commutative diagram
  \begin{equation}\label{diag_tors_1}
  \begin{tikzcd}
    x^{\otimes n}\otimes y^{\otimes n} \arrow[rr] \drar[swap]{\phi_x\otimes \phi_y} & & 
    (x\otimes y)^{\otimes n} \dlar{\phi_{x\otimes y}} \\
    & I & 
  \end{tikzcd} 
  \end{equation}
  and for each morphism $f:x\to y$ in $\mathbf{C}$:
  \begin{equation}\label{diag_tors_2}
  \begin{tikzcd}
    x^{\otimes n} \drar[swap]{\phi_x} \arrow[rr, "f^{\otimes n}"] & & 
    y^{\otimes n} \dlar{\phi_y} \\
    & I. & 
  \end{tikzcd}
  \end{equation}
\end{prop}

\begin{proof}
  It is easy to see that the extension of $\phi_x$ to $F$ is necessarily unique
  (up to isomorphism of course), and that it exists exactly when we have the commutative
  diagram
  \[ \begin{tikzcd}
    x^{\otimes n}\otimes x \arrow[rr] \drar[swap]{\phi_x\otimes \Id_x} & & 
    x\otimes x^{\otimes n} \dlar{\Id_x\otimes \phi_x} \\
    & x & 
  \end{tikzcd} \]
  where the morphism in the top row is given by the identification of
  $x^{\otimes n}\otimes x$ and $x\otimes x^{\otimes n}$ with $x^{\otimes n+1}$.
  But, up to the action of the permutation group $\mathfrak{S}_{n+1}$
  on the top row, that diagram always commutes, and since $x$ is strongly symmetric 
  that action is trivial.

  It is clear that the two diagrams in the statement are necessary to get a monoidal
  functor $\mathbf{C}\to \mathbf{C}[n]$. Conversely, if they hold, and if we send
  each $x\in \mathbf{C}$ to the unique $F_x\in \mathbf{C}[n]$ extending $\phi_x$,
  to get a functor $\mathbf{C}\to \mathbf{C}[n]$ we need that each $f:x\to y$
  induces a monoidal transformation $F_x\to F_y$, and since the composition 
  $\mathbf{C}\to \mathbf{C}[n]\to \mathbf{C}$ should be isomorphic to the identity
  the component $F_x(1)\to F_y(1)$ has to correspond to $f$, so it is entirely determined,
  and is only well-defined if the diagram (\ref{diag_tors_2}) holds. It is straightforward
  that the diagram (\ref{diag_tors_1}) will guarantee that the functor 
  $\mathbf{C}\to \mathbf{C}[n]$ is monoidal.
\end{proof}

Note that if $\mathbf{C}$ is a groupoid (so all morphisms are invertible), then it is
enough to fix one $\phi_x$ to determine the whole structure (if it exists).

\subsection{Lax monoidal functors and graded rings}\label{sec_grad}

Up until now we have only considered \emph{strong} monoidal functors, but
lax monoidal functors are also key to our central construction. Recall the definition
of graded algebras:

\begin{defi}
  Let $M$ be a commutative monoid and let $R$ be a commutative ring. An $M$-graded $R$-module
  is an $R$-module $A$ endowed with an $R$-module decomposition $A = \bigoplus_{x\in M}A_x$.
  An $M$-graded $R$-algebra is then an algebra which is graded as a module
  such that $1\in A_0$ and $A_x\cdot A_y\subset A_{x+y}$ (writing $M$ additively).
\end{defi}

In the literature, the most common cases are $M=\N$ and $M=\Zd$. Note that if $A$ 
is a graded algebra, $A_0$ is always a ring,
and if $A$ is commutative then $A$ is a graded $A_0$-algebra.
There is an obvious notion of graded module/algebra morphisms, which are
just morphisms preserving the decomposition, and therefore we get
categories $\mcat_M$, $\acat_M$ and $\cacat_M$. These categories are actually 
themselves naturally monoidal, using the tensor product
\[ (A\otimes B)_x = \bigoplus_{y+z=x} A_y\otimes_R B_z, \]
and they have a natural symmetric structure which consists in applying the
switching morphism to each $A_y\otimes_R B_z$.

\begin{ex}
  When $M$ is the trivial monoid, then $\mcat_M$, $\acat_M$ and $\cacat_M$
  are nothing but $\mcat$, $\acat$ and $\cacat$, with their usual
  symmetric monoidal structure.
\end{ex}

Then we can make the following fundamental observation: let $M$ and $N$ be 
commutative monoids; to any \emph{lax} monoidal functor $F: \cat{M}\to \mcat_N$,
we can associate the $R$-module $A_F = \bigoplus_{x\in M} F(x)$, which is
an $(M\times N)$-graded module (since each $F(x)$ is itself $N$-graded).
Then we can define an $R$-bilinear product $A_F\otimes_R A_F\to A_F$ using the maps
$F(x)\otimes F(y)\to F(x+y)$ corresponding to the monoidal structure on $F$.

\begin{prop}
  Let $M$ be a monoid. Then for any lax monoidal functor $F: \cat{M}\to \mcat_N$, 
  the associated graded object $A_F$ is an $(M\times N)$-graded $R$-algebra, 
  and the association $F\mapsto A_F$ defines an equivalence of categories
  \[ \LaxHom_\otimes(\cat{M},\mcat_N) \Isom \acat_{M\times N}. \]

  This also induces an equivalence
  \[ \LaxHom_\otimes^s(\cat{M},\mcat_N) \Isom \cacat_{M\times N}. \]
\end{prop}

\begin{proof}
  The fact that $A_F$ is a graded algebra is exactly a reformulation of the fact
  that $F$ is monoidal: the associativity in $A_F$ corresponds to the compatibility
  of $F$ with associators, and the fact that the structural morphism $R\to F(0)$
  gives a unit element corresponds to the compatibility with unitors.

  There is an obvious quasi-inverse sending a graded algebra $A$ to the functor
  $F:x \mapsto A_x$, which is clearly monoidal for the same reasons.

  The fact that $F$ is symmetric exactly means that $F(x)\otimes F(y)\to F(x+y)$
  and $F(y)\otimes F(x)\to F(x+y)$ are related by the switching morphism
  $F(x)\otimes F(y)\to F(y)\otimes F(x)$, which precisely means that the algebra
  is commutative.
\end{proof}

This gives us the following construction: if $\mathbf{C}$ is any monoidal category,
and $K: \mathbf{C}\to \mcat_N$ is a lax monoidal functor, then the composition
with $K$ defines a functor
\begin{equation*}
  \Hom_{\otimes}(\cat{M}, \mathbf{C}) \xrightarrow{K\circ - } 
  \LaxHom_\otimes(\cat{M}, \mcat_N) \simeq \acat_{M\times N},
\end{equation*}  
and when $K$ is symmetric, we get
\begin{equation*}
  \Hom_{\otimes}^s(\cat{M}, \mathbf{C}) \xrightarrow{K\circ - } 
  \LaxHom_\otimes^s(\cat{M}, \mcat_N) \simeq \cacat_{M\times N}.
\end{equation*}  

Actually, we get something a little better: the objects we get are graded $K(I)$-algebras,
since any commutative graded algebra $A$ is in fact an $A_0$-algebra. Let us
emphasize what we really want to use in the end:

\begin{coro}\label{cor_graded_functors}
  Let $\mathbf{C}$ be a symmetric monoidal category, $N$ a commutative monoid, 
  and let  $K: \mathbf{C}\to \mcat[\Z]_N$ be a symmetric lax monoidal functor.
  Then if $\mathbf{C}$ has an abelian gs-structure, composition with $K$ induces a
  functor
  \[ \mathbf{C}\to \cacat[K(I)]_\Z, \]
  and if $\mathbf{C}$ has a coherent $n$-torsion structure it induces a functor
  \[ \mathbf{C}\to \cacat[K(I)]_{\Z/n\Z}. \]
\end{coro}

\begin{proof}
  The discussion just above shows that from $K$ we get functors
  \[ \mathbf{C}^{\times,s}\To \cacat[K(I)]_\Z,\, \mathbf{C}[n]\To 
  \cacat[K(I)]_{\Z/n\Z}, \]
  simply using $M=\Z$ and $M=\Z/n\Z$. Now we just compose with the structural
  functors $\mathbf{C}\to \mathbf{C}^{\times, s}$ and $\mathbf{C}\to \mathbf{C}[n]$,
  respectively.
\end{proof}

\section{The hermitian Brauer 2-group}\label{sec_brau}

In this section we review hermitian Morita theory, as developped in \cite{FME}
or \cite{Knu}, in the case of central simple algebras with involution
(for which we take \cite{BOI} as a reference). We adopt a categorical
point of view that allows the theory to be expressed in a very efficient way.

\subsection{Hermitian modules and involutions}\label{sec_herm}

We start, for the reader's convenience as well as for establishing notations,
by reviewing basic facts about hermitian modules (see \cite{BOI} for a reference,
or \cite{Knu} for an account over general rings with involution).

\subsubsection*{Morita equivalence}

Let $A$ and $B$ be central simple algebras over $K$. We say that a $B$-$A$-bimodule
$V$ is a Morita equivalence if the following equivalent conditions hold (see \cite[1.10]{BOI}):
\begin{itemize}
  \bitem the left action of $B$ gives an isomorphism $B\simeq \End_A(V)$;
  \bitem the right action of $A$ gives an isomorphism $A\simeq \End_B(V)$.
\end{itemize}
We use the notation $B\mor{V} A$ to state that $V$ is such a bimodule.
Then $A$ and $B$ are Brauer-equivalent iff there exists a Morita equivalence,
which is then unique up to isomorphism.

We can always consider $A$ as a tautological $A$-$A$-bimodule, and it defines a Morita
equivalence. We will often write $|A|$ when we consider $A$ as a vector space or a
module, so we can write $A\mor{|A|} A$.

\begin{ex}
  It is a defining property of Azumaya algebras that the natural "sandwich" map
  \begin{equation}\label{eq_sandwich}
    \anonfoncdef{A\otimes_K A^{op}}{\End_K(|A|)}{a\otimes b}{(x\mapsto axb)}
  \end{equation}
  is a $K$-algebra isomorphism, so we get $(A\otimes_K A^{op})\mor{|A|} K$.
\end{ex}

\subsubsection*{Hermitian forms}

Recall that if $(A,\sigma)$ is an algebra with involution over $(K,\iota)$ 
and $\eps\in U(K,\iota)$, an $\eps$-hermitian module $(V,h)$ over $(A,\sigma)$ 
is a (right) $A$-module $V$ equipped with an $\eps$-hermitian form $h$, meaning that
\[ h: V\times V\to A \]
is bi-additive and satisfies for all $x,y\in V$ and all $a,b\in A$:
\begin{align*}
  h(xa,yb) &= \sigma(a)h(x,y)b, \\
  h(y,x) &= \eps \sigma(h(x,y)).
\end{align*}

We always assume that $\eps$-hermitian forms are regular, meaning that the
induced map $V\to \Hom_A(V,A)$ given by $x\mapsto h(x,-)$ is bijective. An
isometry between two $\eps$-hermitian modules is a module isomorphism which
preserves the hermitian forms. We call $\eps$ the \emph{sign} of $h$, and use
the notation $\eps_h$ to refer to it when it is not already introduced.

\begin{ex}
  An $\eps$-hermitian module over $(K,\Id)$ is simply an $\eps$-symmetric bilinear 
  module over $K$ (and in that case $\eps=\pm 1$).
\end{ex}

Let $(A,\sigma)$ be an algebra with involution over $K$, and let
$a\in \Sym^\eps(A^\times,\sigma)$ (for instance, $a\in K^\times$ and $\eps=1$).
Then we define an $\eps$-hermitian form over $|A|$, by:
\[ \foncdef{\fdiag{a}_\sigma}{|A|\times |A|}{A}{(x,y)}{\sigma(x)ay.} \]
We call such a form \emph{elementary diagonal}. We will write $\fdiag{a_1,\dots,a_n}_\sigma$
for an orthogonal sum $\fdiag{a_1}_\sigma \perp\dots\perp \fdiag{a_n}_\sigma$ (where all
$a_i$ are $\eps$-symmetric), and call such a form \emph{diagonal}.

\begin{rem}
  Not every form is diagonalizable. In fact, for $(V,h)$ to be diagonalizable
  it is clearly necessary that $V$ is a free $A$-module (which is in this case
  equivalent to $\dim_K(V)$ being a multiple of $\dim_K(A)$). It
  turns out that this condition is actually sufficient, except in the special
  case where $(A,\sigma)=(K,\Id)$ and $\eps=-1$, which is the only case where
  $h(x,x)=0$ for all $x\in V$.
\end{rem}

\subsubsection*{Hermitian Morita equivalences}

If $(V,h)$ is an $\eps$-hermitian module over $(A,\sigma)$, then the adjoint
involution $\sigma_h$ on $\End_A(V)$ is defined (see \cite[4.1]{BOI}) by the fact
that for all $x,y\in V$ and all $f\in \End_A(V)$:
\[ h(f(x),y) = h(x,\sigma_h(f)(y)). \]
If $(A,\sigma)$ and $(B,\tau)$ are algebras with involution over $(K,\iota)$, 
we say that $(V,h)$ is an $\eps$-hermitian Morita equivalence between $(B,\tau)$ and
$(A,\sigma)$, which we write
\[ (B,\tau) \mor{(V,h)} (A,\sigma),  \]
if $B\mor{V} A$, $h$ is an $\eps$-hermitian form over $(A,\sigma)$ on $V$, 
and $\tau$ corresponds to $\sigma_h$ through the natural isomorphism $B\simeq \End_A(V)$. 
In particular, any $\eps$-hermitian module $(V,h)$ over $(A,\sigma)$ defines
an equivalence $(\End_A(V),\sigma_h) \mor{(V,h)} (A,\sigma)$.
An isomorphism of $\eps$-hermitian Morita equivalences is a bimodule isomorphism which 
is also an isometry.

If $B\mor{V} A$, then there always exists an $\eps$-hermitian
form $h$ on $V$ such that $(B,\tau) \mor{(V,h)} (A,\sigma)$ (see \cite[4.2]{BOI}). 
When the involutions are unitary (so $\iota\neq \Id$), we can take any $\eps\in U(K,\iota)$;
when the involutions are of the first kind (so $\iota=\Id$), then we must take
$\eps=1$ if $\sigma$ and $\tau$ have the same type (orthogonal or symplectic), 
and $\eps=-1$ otherwise. Moreover, in any case, if $h$ and $h'$ are two possible
choices, then there is $\lambda\in K^\times$ such that $h'=\fdiag{\lambda}h$.

\begin{ex}
  If $a\in \Sym^\eps(A^\times, \sigma)$, then $(A,\sigma_a)\mor{(|A|,\fdiag{a}_\sigma)} 
  (A,\sigma)$, where $\sigma_a(x) = a^{-1}\sigma(x)a$. In particular, if $a\in K^\times$ then
  $\sigma_a=\sigma$.
\end{ex}

\begin{ex}\label{ex_inv_trace}
  Using the involution $\sigma$ on $A$, we can twist the usual sandwich map
  (\ref{eq_sandwich}) to
  \begin{equation}\label{eq_twisted_action}
    \anonfoncdef{A\otimes_K A^{\iota}}{\End_K(|A|)}{a\otimes b}{(x\mapsto ax\sigma(b))}
  \end{equation}
  where $A^{\iota}$ is the same ring as $A$, but with the twisted $K$-algebra
  structure $K\xrightarrow{\iota} K\to A$.

  We call this action of $A\otimes_K A^{\iota}$ on $|A|$ the ``twisted sandwich action''
  (relative to $\sigma$). We will write $|A|_\sigma$ instead of $|A|$ when we
  see it as a left $A\otimes_K A^{\iota}$-module with this action. It is shown in
  \cite[11.1]{BOI} that the so-called involution trace form
  \begin{equation}\label{eq_forme_trace}
    \foncdef{T_\sigma}{|A|\times |A|}{K}{(x,y)}{\Trd_A(\sigma(x)y)}
  \end{equation}
  is a hermitian form over $(K,\iota)$ on the $K$-vector space $|A|$, such that 
  \begin{equation}
    (A\otimes_K A^{\iota},\sigma\otimes \sigma) \mor{(|A|_\sigma,T_\sigma)} (K,\iota).
  \end{equation}
\end{ex}

\subsubsection*{Tensor products}

If $(V_i,h_i)$ is an $\eps_i$-hermitian module over an algebra with involution
$(A_i,\sigma_i)$, for $i=1,2$, then $(V_1\otimes_K V_2,h_1\otimes h_2)$ is an
$\eps_1\eps_2$-hermitian module over $(\mbox{$A_1\otimes_K A_2$},\\
\mbox{$\sigma_1\otimes\sigma_2$})$. In particular, $\fdiag{a}_{\sigma_1}\otimes
\fdiag{b}_{\sigma_2}\simeq \fdiag{a\otimes b}_{\sigma_1\otimes \sigma_2}$.

If for $i=1,2$ we have $(B_i,\tau_i) \mor{(V_i,h_i)} (A_i,\sigma_i)$ then
\[ (B_1\otimes_K B_2,\tau_1\otimes\tau_2) \mor{(V_1\otimes_K V_2,h_1\otimes h_2)}
(A_1\otimes_K A_2,\sigma_1\otimes\sigma_2). \]

\subsubsection*{Conjugate form}

If $(B,\tau) \mor{(V,h)} (A,\sigma)$, then we define $(\bar{V},\bar{h})$, 
where $\bar{V}$ is the $A$-$B$-bimodule defined as $V$ as a $K$-vector space, 
with the action
\[ a\cdot v\cdot b = \tau(b)\cdot v\cdot \sigma(a), \]
and $\bar{h}: \bar{V}\times \bar{V}\to B$ is characterized by
\[  \bar{h}(x,y)z = xh(y,z) \]
for all $x,y,z\in V$. 

Then it is easy to see that $(A,\sigma) \mor{(\bar{V},\bar{h})} (B,\tau)$.

\subsection{The category $\CBrhu$}\label{sec_cbrh}

We now define a category $\CBrhu$, which we call the \emph{hermitian Brauer 2-group}
of $(K,\iota)$, such that:
\begin{itemize}
  \bitem the objects are the algebras with involution $(A,\sigma)$ over $(K,\iota)$;
  \bitem the morphisms from $(B,\tau)$ to $(A,\sigma)$ are the isomorphism classes
    of $\eps$-hermitian Morita equivalences between $(B,\tau) \mor{(V,h)} (A,\sigma)$.
\end{itemize}

We will usually identify an $\eps$-hermitian bimodule and its isomorphism class
when no confusion is possible.
If $f:(B,\tau)\to (A,\sigma)$ is a morphism in $\CBrhu$, we will sometimes
write $(V_f,h_f)$ for the corresponding $\eps$-hermitian bimodule. We usually write
$\CBrh$ for $\CBrh[K,\Id]$.


To properly define $\CBrhu$ as a category, we need to specify how to compose
morphisms. Given two morphisms $(C,\theta)\mor{(U,h)} (B,\tau)$ and 
$(B,\tau) \mor{(V,h')} (A,\sigma)$, we define a $C$-$A$-bimodule
$V\circ U = U\otimes_B V$, and a map $h'\circ h: 
(V\circ U)\times (V\circ U) \to A$ by
\begin{equation}\label{eq_composition_h}
  (h'\circ h)(u\otimes v, u'\otimes v') = h'(v, h(u,u')v').
\end{equation}

\begin{prop}\label{prop_composition}
  In the situation just above, if $h$ and $h'$ are respectively $\eps$
  and $\eps'$-hermitian, then $h'\circ h$ is an $\eps\eps'$-hermitian form
  over $(A,\sigma)$, which defines an equivalence
  \[  (C,\theta) \mor{(V\circ U,h'\circ h)} (A,\sigma) \]
  whose isomorphism class depends only on the isomorphism classes of
  $(U,h)$ and $(V,h')$.
\end{prop}

\begin{proof}
  This is a reformulation of \cite[I.8.1]{Knu} in the special case of
  central simple algebras.
\end{proof}

\begin{ex}\label{ex_comp_diag}
  Let $f: (B,\tau)\mor{(V,h)} (A,\sigma)$ be a morphism in $\CBrhu$, and let
  $b\in \Sym^\eps(B^\times, \tau)$, $a\in \Sym^{\eps'}(A^\times, \sigma)$.
  Then the underlying space of $\fdiag{a}_\sigma\circ f\circ \fdiag{b}_\tau$
  is $|B|\otimes_B V\otimes_A |A|$, which is canonically identified with
  $V$, and the $\eps\eps_f\eps'$-hermitian form is:
  \[ \anonfoncdef{V\times V}{A}{(x,y)}{h(xa,by).} \]
  In particular, $f\circ \fdiag{1}_\tau = \fdiag{1}_\sigma\circ f = f$.
\end{ex}

We can then state:

\begin{prop}\label{prop_brh_cat}
  With the composition of morphisms being given by (\ref{eq_composition_h}),
  $\CBrhu$ is a category, and the identity of $(A,\sigma)$ is $(|A|,\fdiag{1}_\sigma)$.
  Moreover, $\CBrhu$ is actually a groupoid, and the inverse of a morphism $(V,h)$
  is $(\bar{V},\eps_h\bar{h})$.
\end{prop}

\begin{proof}
  All statements are rephrasings of classical statements in hermitian Morita theory,
  in the special case of central simple algebras with involution of the first kind.
  The associativity of the composition is proved in \cite[I.8.1.1]{Knu}. The
  statement on identities follows from Example \ref{ex_comp_diag}, and
  the statement about inverses is proved in \cite[I.9.3.4]{Knu}.
\end{proof}


\subsection{Automorphisms and isometries}\label{sec_autom}

We defined a category $\CBrhu$ in which the (iso)morphisms between two algebras
with involution correspond to hermitian Morita equivalences. But obviously there
is a more elementary notion of isomorphism between algebras with involution.
To be precise, we may define a category $\mathbf{AlgInv}(K,\iota)$, with the same objects
as $\CBrhu$, but where the morphisms are $K$-algebra isomorphisms which are 
compatible with the involutions.

Then to any algebra isomorphism $\phi: (B,\tau)\to (A,\sigma)$ we can associate
$(B,\tau) \mor{(|A|_\phi,\fdiag{1}_\sigma)} (A,\sigma)$
where $|A|_\phi=|A|$ as a right $A$-module, and the left action of $B$ is given by
$b\cdot a = \phi(b)a$ for any $b\in B$ and $a\in |A|$. It is easily seen that this is
indeed a hermitian Morita equivalence, and that this defines a functor
\[ \Theta: \mathbf{AlgInv}(K,\iota)\to \CBrhu \]
which is the identity on objects.

Obviously this functor is far from being full, since there are isomorphisms in
$\CBrhu$ between $(B,\tau)$ and $(A,\sigma)$ whenever $A$ and $B$ are Brauer-equivalent.
On the other hand, we can look at the situation where $(A,\sigma)=(B,\tau)$. We call
\[ \Aut_{\mathcal{M}}(A,\sigma) :=  \Aut_{\CBrhu}(A,\sigma) \]
the group of Morita automorphisms of $(A,\sigma)$, and
\[ \Aut_K(A,\sigma) := \Aut_{\mathbf{AlgInv}(K,\iota)}(A,\sigma) \]
the group of algebraic automorphisms of $(A,\sigma)$. By extension, we say that
a morphism in $\CBrhu$ is algebraic if it is in the image of $\Theta$. Then we have
a group morphism
\[ \Theta: \Aut_K(A,\sigma) \To \Aut_{\mathcal{M}}(A,\sigma) \]
and we want to understand the groups involved, as well as the kernel and cokernel.

Recall from \cite[12.14]{BOI} that the subgroup $\Sim(A,\sigma)\subset A^\times$ of
\emph{similitudes} of $(A,\sigma)$ is defined as
\[ \Sim(A,\sigma) = \ens{a\in A}{a\sigma(a) \in k^\times}. \]
When $a\in \Sim(A,\sigma)$, we can define its \emph{multiplier}
$\mu(a)=a\sigma(a)=\sigma(a)a\in k^\times$, and $\mu$ is a group morphism
$\Sim(A,\sigma)\to k^\times$. Then $\Ker(\mu)$ is the subgroup $\Iso(A,\sigma)$
of \emph{isometries} of $(A,\sigma)$, and $\Ima(\mu)=G(A,\sigma)$ is the group
of multipliers of $(A,\sigma)$. The following proposition gives a complete picture
of the situation:

\begin{prop}\label{prop_autom}
  Let $(A,\sigma)$ be an algebra with involution over $(K,\iota)$. We write 
  $N: K^\times\to k^\times$ for the group morphism $x\mapsto x\iota(x)$ (when
  $\iota\neq \Id$ this is the Galois norm of $K/k$), and $\inte(a)\in \Aut_K(A)$
  for the inner automorphism of $A$ induced by some $a\in A^\times$.
  
  Then we have a commutative diagram of groups with exact rows and columns:
  \[  \begin{tikzcd}[column sep=small]
       & 1 \dar & 1 \dar & 1 \dar & 1 \dar &  \\[-2ex]
      1 \rar & U(K,\iota) \rar \dar & K^\times \rar{N} \dar
          & N(K^\times) \rar \dar & 1 \rar \dar & 1 \\
      1 \rar & \Iso(A,\sigma) \rar \dar{\inte} & \Sim(A,\sigma) \rar{\mu} \dar{\inte}
          & K^\times \rar \dar{\lambda\mapsto (|A|,\fdiag{\lambda}_\sigma)} & K^\times/G(A,\sigma) \rar \dar & 1 \\
      1 \rar & \Ker(\Theta) \rar \dar & \Aut_K(A,\sigma) \rar{\Theta} \dar
          & \Aut_{\mathcal{M}}(A,\sigma) \dar \rar & \Cok(\Theta) \rar \dar & 1 \\[-2ex]
       & 1 & 1 & 1 & 1 & 
       \end{tikzcd} \]
  In particular, there is a canonical isomorphism $\Aut_{\mathcal{M}}(A,\sigma)\simeq 
  K^\times/N(K^\times)$, and when $\iota=\Id$ this gives $\Aut_{\mathcal{M}}(A,\sigma)\simeq
  K^\times/K^{\times 2}$.
\end{prop}

\begin{proof}
  To prove that the diagram commutes, the only non-trivial thing to show is that
  for any $a\in \Sim(A,\sigma)$, $\Theta(\inte(a)) \simeq (|A|, \fdiag{\mu(a)}_\sigma)$.
  By definition, $\Theta(\inte(a))$ corresponds to the hermitian $A$-$A$-bimodule
  $(|A|_a,\fdiag{1}_\sigma)$, where $|A|_a=|A|$ as a right $A$-module,
  and the action on the left is given by $x\cdot y = axa^{-1}y$ for all $x\in A$,
  $y\in |A|_a$. Then we easily see that
  \[ \foncdef{f}{|A|_a}{|A|}{y}{a^{-1}y} \]
  is a bimodule isomorphism. Since
  \[ \fdiag{\mu(a)}_\sigma(f(x),f(y)) = \mu(a)\sigma(f(x))f(y) = 
    \mu(a)\sigma(x)\sigma(a)^{-1}a^{-1}y=\sigma(x)y,  \]
  the map $f$ induces an isometry from $\fdiag{1}_\sigma$ to $\fdiag{\mu(a)}_\sigma$,
  which shows that the diagram commutes.

  The exactness of each row is clear. The exactness of the second column is a consequence
  of \cite[12.15]{BOI}. For the third column, the surjectivity comes from the
  fact that any automorphism of $(A,\sigma)$ in $\CBrhu$ has the form $(|A|,h)$,
  and since $h=\fdiag{1}_\sigma$ is a possible choice, any other choice must have
  the form $h=\fdiag{\lambda}_\sigma$. To prove exactness at $K^\times$, we
  must also show that $(|A|,\fdiag{\lambda}_\sigma)$ and $(|A|,\fdiag{1}_\sigma)$
  are isomorphic iff $\lambda\in N(K^\times)$. But any bimodule isomorphism
  $f:|A|\to |A|$ has the form $f(x)=ax$ for some $a\in K^\times$, so if it induces an
  isometry from $\fdiag{\lambda}_\sigma$ to $\fdiag{1}_\sigma$, we must have 
  $\lambda=a\iota(a)$.

  For the first and fourth columns, the only thing to show is the surjectivity,
  but it is easily obtained by a diagram chase now that we have shown exactness
  everywhere else.
\end{proof}

\begin{rem}\label{ref_alg_map}
  Given two morphisms $f_i: (B_i,\tau_i)\mor{(V_i,h_i)} (A,\sigma)$ in $\CBrhu$, 
  since $\CBrhu$ is a groupoid there is a unique $g: (B_1,\tau_1)\to (B_2,\tau_2)$
  such that $f_1 = f_2\circ g$. Now $h_i$ is isometric to $h_2$ if and only if
  $g$ is algebraic. Thus $\CBrhu$ cannot detect isometric modules on its own,
  but it can if we add the data of $\Theta$.
\end{rem}

\subsection{Monoidal structure and the Goldman element}\label{sec_gold}

The tensor product of algebras with involution endows $\mathbf{AlgInv}(K,\iota)$
with a natural symmetric monoidal structure, with unit object $(K,\iota)$, such that 
the obvious forgetful functor $\mathbf{AlgInv}(K,\iota)\to \acat[K]$ (which is faithful 
but not full) is symmetric strong monoidal.

Likewise, the tensor product of algebras with involution (and of hermitian modules,
for the morphisms) defines a monoidal structure on $\CBrhu$, with unit object $(K,\iota)$, 
such that $\Theta: \mathbf{AlgInv}(K,\iota)\to \CBrhu$ is a strict monoidal functor.
We can also transport the symmetric structure:

\begin{prop}
  The monoidal category $\CBrhu$ has a unique symmetric structure such that
  $\Theta$ is symmetric.
\end{prop}

\begin{proof}
  If we want $\Theta$ to be symmetric we need to define the switching map in $\CBrhu$
  as 
  \[ (A,\sigma)\otimes_K (B,\tau) \xrightarrow{\Theta(s)} (B,\tau)\otimes_K (A,\sigma), \]
  where $s: (A,\sigma)\otimes_K (B,\tau)\to (B,\tau)\otimes_K (A,\sigma)$ is the
  switching map in $\mathbf{AlgInv}(K,\iota)$.

  Since the switching maps, the associators, and the unit maps are all images
  of morphisms in $\mathbf{AlgInv}(K,\iota)$ by $\Theta$, all the coherence axioms
  can be verified in $\mathbf{AlgInv}(K,\iota)$.
\end{proof}

In particular, this means that for any algebra with involution $(A,\sigma)$
over $(K,\iota)$ and any $n\in \N$ there are natural group morphisms
\[ \begin{tikzcd}
  \mathfrak{S}_n \rar \drar & \Aut_K(A^{\otimes n},\sigma^{\otimes n}) \dar{\Theta} \\
  & \Aut_{\mathcal{M}}(A^{\otimes n},\sigma^{\otimes n}).
\end{tikzcd} \]

A remarkable feature of Azumaya algebras is the existence of the so-called
Goldman element (see \cite[3.A]{BOI}).

\begin{defi}
  Let $A$ be a central simple algebra over $K$. Its Goldman element $g_A\in A\otimes_K A$
  is defined by the fact that the sandwich map (\ref{eq_sandwich}) sends $g_A$, seen as
  an element of $A\otimes_K A^{op}$, to the reduced trace $\Trd_A: A\to K$, viewed as a
  linear map $|A|\to |A|$.
\end{defi}

It is shown in \cite[10.1]{BOI} that the natural morphism 
$\mathfrak{S}_n\to \Aut_K(A^{\otimes n})$ admits a lift
\[  \begin{tikzcd}
      & (A^{\otimes n})^\times \dar{\inte} \\
    \mathfrak{S}_n \rar \urar & \Aut_K(A^{\otimes n})
  \end{tikzcd} \]
where $\mathfrak{S}_n\to (A^{\otimes n})^\times$ is uniquely charaterized by
\[ (i, i+1) \mapsto 1\otimes\cdots \otimes g_A \otimes 1\otimes \dots \otimes 1 \]
(with $g_A$ occupying the $i$ and $i+1$ slots).

\begin{prop}\label{prop_sym_iso}
  Let $(A,\sigma)$ be an algebra with involution over $(K,\iota)$. Then the natural
  morphism $\mathfrak{S}_n\to (A^{\otimes n})^\times$ gives a commutative diagram
  of groups:
  \[ \begin{tikzcd}
      & \Iso(A^{\otimes n},\sigma^{\otimes n}) \dar{\inte} \\
    \mathfrak{S}_n \rar \urar & \Aut_K(A^{\otimes n},\sigma^{\otimes n}).
  \end{tikzcd} \]
\end{prop}

\begin{proof}
  The only thing to show is that $\mathfrak{S}_n\to (A^{\otimes n})^\times$ actually
  takes values in $\Iso(A^{\otimes n},\sigma^{\otimes n})$. It is enough to consider
  transpositions of the form $(i,i+1)$, which means that it is enough to show
  that $g_A\in \Iso(A^{\otimes 2},\sigma^{\otimes 2})$. Since $g_A^2=1$ (\cite[3.6]{BOI}),
  this is the same as proving that $g_A$ is symmetric for $\sigma\otimes\sigma$,
  which is proved in \cite[10.19]{BOI}.
\end{proof} 

The crucial consequence is the following:

\begin{coro}\label{cor_strong_sym}
  The symmetric monoidal category $\CBrhu$ is strongly symmetric.
\end{coro}

\begin{proof}
  The morphism $\mathfrak{S}_n \to \Aut_{\mathcal{M}}(A^{\otimes n},\sigma^{\otimes n})$
  factors through $\Iso(A^{\otimes n},\sigma^{\otimes n})$ by Proposition \ref{prop_sym_iso},
  which by Proposition \ref{prop_autom} means that it is trivial.
\end{proof}

\subsection{Inverses and coherent $2$-torsion}\label{sec_tors_cbrh}

For any small category $C$, let us write $\pi_0(C)$ for its set of isomorphism
classes. When $C$ is monoidal, $\pi_0(C)$ inherits a natural monoid structure,
which is commutative when $\mathbf{C}$ is symmetric.
As we have already stated, there is a morphism in $\CBrhu$ between $(B,\tau)$
and $(A,\sigma)$ if and only if $A$ and $B$ are Brauer-equivalent, which means
that we get a canonical embedding $\pi_0(\CBrhu)\to \Br(K)$. In particular,
all objects in $\CBrhu$ are weakly invertible. When $\iota\neq \Id$, $\pi_0(\CBrhu)$
is identified with the kernel of the norm map $\Br(K)\to \Br(k)$; when
$\iota=\Id$, $\pi_0(\CBrh)$ is identified with the $2$-torsion subgroup $\Br(K)[2]$
(in particular, every object has weak $2$-torsion).

In fact, Example \ref{ex_inv_trace} gives a more precise statement about invertibility
and torsion, as it gives an explicit weak inverse of each
$(A,\sigma)$, namely $(A^{\iota},\sigma)$, which is indeed equal to $(A,\sigma)$
when $\iota=\Id$, and an explicit equivalence from
their product to the unit object $(K,\iota)$, given by the involution trace form 
$(|A|_\sigma,T_\sigma)$.
\\

According to Theorem \ref{thm_inverses}, since $\CBrhu$ is strongly symmetric by
Corollary \ref{cor_strong_sym}, the data of all those equivalences defines a canonical
abelian gs-structure $\CBrhu \to \CBrhu^{\times,s}$. This is always the one
we implicitly refer to when we mention such a structure.

When $\iota=\Id$, we can hope to get a coherent $2$-torsion structure since all
objects have weak $2$-torsion, and indeed:

\begin{thm}\label{thm_torsion}
  Associating to each $(A,\sigma)\in \CBrh$ the canonical equivalence 
  $(A\otimes_K A,\sigma\otimes \sigma)\mor{(|A|_\sigma,T_\sigma)} (K,\Id)$ 
  defines, in the way of Proposition \ref{prop_torsion}, a canonical coherent $2$-torsion 
  structure on $\CBrh$.
\end{thm}

\begin{proof}
  According to Proposition \ref{prop_torsion}, we have to verify the two diagrams
  (\ref{diag_tors_1}) and (\ref{diag_tors_2}). The first one is straightforward, using the fact that
  the reduced trace $\Trd_{A\otimes_K B}$ of $A\otimes_K B$ is nothing but
  $\Trd_A\otimes \Trd_B$.

  The second one requires more work; it states that for any 
  $f:(B,\tau)\mor{(V,h)} (A,\sigma)$ in $\CBrh$, the diagram
  \[ \begin{tikzcd} \label{diagr_torsion_commut}
      (B\otimes_K B,\tau\otimes \tau) \ar{rr}{f^{\otimes 2}} \drar[swap]{(B,T_\tau)} &
      & (A\otimes_K A,\sigma\otimes\sigma) \dlar{(A,T_\sigma)} \\
      & I &
    \end{tikzcd} \]
  commutes. We define the following $(B\otimes_K B)$-$K$-bimodule morphism:
  \[ \foncdef{\psi}{(V\otimes_K V)\otimes_{A\otimes_K A}A}{B}
     {(v\otimes w)\otimes a}{\phi_h(va\otimes w).} \]
   where $\phi_h: V\otimes_K V\to B$ (see \cite[5.1]{BOI}) is given, identifying
   $B=\End_A(V)$, by
  \[ \phi_h(v\otimes w)(x) = vh(w,x). \]

  Then $\psi$ is well-defined since for $x,y\in A$:
  \begin{align*}
    \psi((v\otimes w)\otimes (xa\sigma(y)) &= \phi_h(vxa\sigma(y)\otimes w) \\
    &= \phi_h(vxa\otimes wy) \\
    &= \psi ((vx\otimes wy)\otimes a),
  \end{align*}
  and it is a bimodule morphism since for $x,y\in B$:
  \begin{align*}
    \psi((xv\otimes yw)\otimes a) &= \phi_h(xva\otimes yw) \\
    &= x\phi_h(va\otimes v)\tau(y).
  \end{align*}
  
  To show that $\psi$ is an isometry, we must establish equality between
  on the one hand
  \begin{align*}
    & \Trd_B\left(\tau \left( \psi((v\otimes w)\otimes a)\right)\cdot \psi(((v'\otimes w')\otimes b))\right) \\
    &= \Trd_B\left( \tau(\phi_h(va\otimes w))\cdot \phi_h(v'b\otimes w')\right)
  \end{align*}
  and on the other hand
  \begin{align*}
    & \Trd_A \left(\sigma(a)(h\otimes h)(v\otimes w,v'\otimes w')\cdot b\right) \\
    &= \eps \Trd_A(\sigma(a)h(v,v')bh(w',w))
  \end{align*}
  with $\eps=\eps_h$. Now applying successively the formulas in theorem \cite[5.1]{BOI},
  we get:
  \begin{align*}
    & \Trd_B\left( \tau(\phi_h(va\otimes w)) \cdot \phi_h(v'b\otimes w')\right) \\
    &= \eps \Trd_B\left(\phi_h(w\otimes va) \cdot \phi_h(v'b\otimes w')\right) \\
    &= \eps \Trd_B\left(\phi_h( wh(va,v'b) \otimes w') \right) \\
    &= \eps \Trd_A\left(h(w',wh(va,v'b))\right) \\
    &= \eps \Trd_A\left(h(w',w)\sigma(a)h(v,v')b\right). \qedhere
  \end{align*}
\end{proof}

\section{The mixed Witt ring}\label{sec_mixed}

In this section we combine the techniques developped in Section \ref{sec_grad} to define
graded rings with the structure on $\CBrhu$ we established in \ref{sec_tors_cbrh}.

\subsection{The Witt groups}\label{sec_group}

We start by defining the underlying group structures.

\begin{defi}
  Let $(A,\sigma)$ be an algebra with involution over $(K,\iota)$, and let 
  $\eps\in U(K,\iota)$. We denote by $SW^\eps(A,\sigma)$ the set of isometry classes
  of $\eps$-hermitian modules over $(A,\sigma)$; it is a commutative semi-group when 
  equipped with the orthogonal direct sum of $\eps$-hermitian modules.

  We define $SW_\eps(A,\sigma)=SW^{t(\sigma)\eps}(A,\sigma)$ where $t(\sigma)= 1$ when 
  $\sigma$ is orthogonal or unitary, and $t(\sigma)= -1$ when $\sigma$ is symplectic.

  We also set: $SW_\pm(A,\sigma) = SW_1(A,\sigma)\oplus SW_{-1}(A,\sigma)$.
\end{defi}

We often make the slight abuse of notations $SW_\eps(K)$ for $SW_\eps(K,\Id)$;
note that $SW_\eps(K)=SW^\eps(K)$.

We can observe that there is a natural functoriality for those semi-groups: if
$f: (B,\tau) \to (A,\sigma)$ is a morphism in $\CBrhu$ with sign $\eps'\in U(K,\iota)$,
then it induces a function
\begin{equation}\label{eq_induced}
  f_*: SW^{\eps}(B,\tau)\To SW^{\eps\eps'}(A,\sigma)
\end{equation}
given by $g\mapsto f\circ g$, where we see any $(V,h)\in SW^{\eps}(B,\tau)$
as a morphism $g: (\End_K(V),\tau_h)\mor{(V,h)} (B,\tau)$. 

To simplify things a bit, and with no real consequence to the theory, we are
going to consider the subcategory $\CBrhu'$ of $\CBrhu$, which is equal to
$\CBrhu$ when $\iota=\Id$, but only includes the morphisms which are $1$-hermitian
when $\iota\neq \Id$. The restriction is mostly harmless since any two objects
which are isomorphic in $\CBrhu$ are still isomorphic in $\CBrhu'$.

\begin{prop}
  For any $\eps\in U(K,\iota)$, the association $(A,\sigma)\mapsto SW_\eps(A,\sigma)$
  and $f\mapsto f_*$ as described above defines a functor from $\CBrhu'$ to
  the category of commutative semi-groups. In particular, each $f_*$ is actually
  an isomorphism.
\end{prop}

\begin{proof}
  First we need to establish that $f_*$ does define a function from $SW_\eps(B,\tau)$
  to $SW_\eps(A,\sigma)$. When $\iota\neq \Id$, this comes from the restriction to
  $\CBrhu'$, since in (\ref{eq_induced}) we have $\eps'=1$. When $\iota=\Id$, there
  is no a priori restriction on $\eps'$, but it is actually determined by the types
  of $\sigma$ and $\tau$. In fact, we get
  \[ f_*: SW_{t(\tau)\eps}(B,\tau)\To SW_{t(\sigma)\eps\eps'}(A,\sigma) \]
  and since $t(\tau) = \eps't(\sigma)$, the signs do correspond on both sides.

  The functoriality is a direct consequence of the associativity of the composition
  in $\CBrhu$. The fact that it is a semi-group morphism amounts to the formula
  $h\circ (h_1\perp h_2) \simeq (h\circ h_1)\perp (h\circ h_2)$ which can be checked
  directly on the definition of the composition of hermitian forms.

  Since all morphisms in $\CBrhu'$ are invertible, all $f_*$ are invertible too
  by functoriality.
\end{proof}

The point of introducing $SW_\eps$ (instead of the more obvious $SW^\eps$) is precisely
so we get the functoriality in the previous proposition. 


\begin{defi}
  For each $\eps\in U(K,\iota)$ and each algebra with involution $(A,\sigma)$
  over $(K,\iota)$, we define the Grothendieck-Witt group $GW_\eps(A,\sigma$)
  as the Grothendieck group of the semi-group $SW_\eps(A,\sigma)$.

  If $(V,h)\in SW_\eps(A,\sigma)$, we say that a submodule $W\subset V$ is
  a Lagrangian of $(V,h)$ if $W=W^\perp$. If $(V,h)$ has a Lagrangian, it is called
  hyperbolic.

  We define the Witt group $W_\eps(A,\sigma)$ as the quotient of $GW_\eps(A,\sigma)$
  by the subgroup generated by the hyperbolic forms.
\end{defi}

Of course we also have groups $GW^\eps$ and $W^\eps$, but we are less interested in
them because they lack the functoriality we are looking for. We also have in an
obvious way $GW_\pm$ and $W_\pm$.

Note that since by Witt's theorem $SW_\eps(A,\sigma)$
satisfies the cancellation property, it embeds in $GW_\eps(A,\sigma)$.

\begin{prop}
  For any $\eps\in U(K,\iota)$, there is a unique structure of functor from 
  $\CBrhu'$ to the category of abelian groups on $GW_\eps$ and $W_\eps$ such that 
  the canonical maps $SW_\eps\to GW_\eps\to W_\eps$ are natural transformations.
\end{prop}

\begin{proof}
  Since the construction of the Grothendieck group of a semi-group is functorial,
  $GW_\eps$ is the composition of two functors, so it is a functor. To see that $W_\eps$
  is a functor, we just need to show that any $f_*$ sends hyperbolic forms to hyperbolic
  forms, which is clear since if $W\subset V$ is a Lagrangian of $(V,h)$, then
  $f_*(W,h_{|W})$ is easily seen to be a Lagrangian of $f_*(V,h)$.
\end{proof}

Clearly, we get functors $GW_\pm$ and $W_\pm$ from $\CBrhu$ to $\mu_2(K)$-graded 
abelian groups.

\subsection{The mixed Witt ring}\label{sec_ring}

It is easy to see that the natural maps
\[ SW^\eps(A,\sigma) \times SW^{\eps'}(B,\tau)
\to SW^{\eps\eps'}(A\otimes_K B, \sigma\otimes \tau) \]
given by the tensor products of hermitian modules define maps
\begin{equation*}
  SW_\eps(A,\sigma) \times SW_{\eps'}(B,\tau)
\to SW_{\eps\eps'}(A\otimes_K B, \sigma\otimes \tau).
\end{equation*}

This is because $t(\sigma\otimes \tau)=t(\sigma)\cdot t(\tau)$. Since
these maps are additive in each variable, they induce group morphisms
\begin{equation}\label{eq_monoid_gw}
  GW_\eps(A,\sigma) \otimes GW_{\eps'}(B,\tau)
\to GW_{\eps\eps'}(A\otimes_K B, \sigma\otimes \tau)
\end{equation}
by the universal property of the Grothendieck groups.

\begin{prop}
  The maps (\ref{eq_monoid_gw}) define on $GW_\pm$ a structure of symmetric
  lax monoidal functor from $\CBrhu'$ to $\mcat[\Z]_{\mu_2(K)}$. Furthermore, there is 
  a unique structure of symmetric lax monoidal functor on $W_\pm$ such that
  the canonical transformation $GW_\pm\to W_\pm$ is monoidal.
\end{prop}

\begin{proof}
  The fact that $GW_\pm$ is symmetric monoidal is completely straightforward given the
  usual properties of tensor products. To see that it induces a symmetric monoidal
  structure on $W_\pm$, the only point is that the products are well-defined,
  which means that if $(V,h)$ is hyperbolic, then so is $(U\otimes_K V,h'\otimes h)$
  for any $(U,h')$. Now if $W\subset V$ is a Lagrangian for $h$, it is easy to see that
  $U\otimes_K W$ is a Lagrangian for $h'\otimes h$.
\end{proof}

When $\iota\neq \Id$, we are not really interested in $GW_\pm$ and $W_\pm$ but rather
simply in the neutral components $GW_1$ and $W_1$ (since $GW_{-1}\approx GW_1$ in
that case, albeit non-canonically), which give symmetric monoidal functors to 
the category of abelian groups.

We can therefore apply Corollary \ref{cor_graded_functors}. Using
respectively $K=GW_1$ and $K=W_1$ and the canonical abelian gs-structure of $\CBrhu'$ we 
described in \ref{sec_tors_cbrh}, we get functors:
\begin{equation}
  \hat{GW}: \CBrhu' \to \cacat[GW(K,\iota)]_{\Z}
\end{equation}
\begin{equation}
  \hat{W}: \CBrhu' \to \cacat[W(K,\iota)]_{\Z}.
\end{equation}

In concrete terms: 
\[ \hat{GW}(A,\sigma) = \bigoplus_{n\in \N^*} GW_1((A^\iota)^{\otimes n},\sigma^{\otimes n})
\oplus GW_1(K,\iota) \oplus \bigoplus_{n\in \N^*} GW_1(A^{\otimes n},\sigma^{\otimes n}) \]
and likewise for $\hat{W}(A,\sigma)$. An element in 
$GW_1((A^\iota)^{\otimes n},\sigma^{\otimes n})$ has degree $-n$, and when an element 
of positive degree is mutliplied with an element of negative degree, the difference
is canceled using the hermitian Morita equivalence 
$(A\otimes A^\iota,\sigma\otimes \sigma) \mor{} (K,\iota)$ as many times as necessary.

The point of the machinery developed in Section \ref{sec_grad} is that it guarantees that this
is well-defined, and gives a commutative $\Z$-graded ring, functorial in $(A,\sigma)$ (none
of which is trivial). Although this ring has interesting applications for the theory
of hermitian forms over algebras with unitary involution, we will put it aside in
the remainder of the article, and focus on the case of involutions of the first kind,
for which a more powerful construction is available.
\\

If we now specialize to $\iota=\Id$, we can use $K=GW_\pm$ and $K=W_\pm$
in Corollary \ref{cor_graded_functors}, as well as the
canonical coherent $2$-torsion structure of $\CBrh$, to get functors
\begin{equation}
  \tld{GW}: \CBrh \to \cacat[GW(K)]_{\Gamma}
\end{equation}
\begin{equation}
  \tld{W}: \CBrh \to \cacat[W(K)]_{\Gamma}
\end{equation}
where $\Gamma = \Zd \times \mu_2(K)$. 

In concrete terms, for any algebra with involution (of the first kind), we have
the \emph{mixed Grothendieck-Witt ring}
\begin{equation}
  \tld{GW}(A,\sigma) = GW_1(K) \oplus GW_{-1}(K) \oplus GW_1(A,\sigma) \oplus 
  GW_{-1}(A,\sigma)
\end{equation}
and the \emph{mixed Witt ring}
\begin{equation}
  \tld{W}(A,\sigma) = W_1(K) \oplus W_1(A,\sigma) \oplus W_{-1}(A,\sigma)
\end{equation}
(note that $W_{-1}(K)=0$ because all anti-symmetric bilinear forms are hyperbolic).
The word "mixed" is here to indicate that the ring structure mixes bilinear and
hermitian forms. Of course, given its construction, $\tld{W}(A,\sigma)$ is the quotient
of $\tld{GW}(A,\sigma)$ by the ideal of hyperbolic forms.

A lot of the ring structure of $\tld{GW}(A,\sigma)$ comes from the fact that
$GW_\pm(K)$ is a commutative ring and that $GW_\pm(A,\sigma)$ is a module
over this ring, which is not something new. The novel part is the product
\[ GW_\eps(A,\sigma)\otimes GW_{\eps'}(A,\sigma)\to GW_{\eps\eps'}(K), \] 
which simply comes from the natural product $GW_\eps(A,\sigma)\otimes GW_{\eps'}(A,\sigma)
\to GW_{\eps\eps'}(A^{\otimes 2},\sigma^{\otimes 2})$ composed with the isomorphism
$GW_{\eps\eps'}(A^{\otimes 2},\sigma^{\otimes 2})\simeq GW_{\eps\eps'}(K)$ coming
from the canonical hermitian Morita equivalence between $(A^{\otimes 2},\sigma^{\otimes 2})$
and $(K,\Id)$, given by $(|A|_\sigma,T_\sigma)$. Once again, the machinery
of Section \ref{sec_grad} guarantees that everything is well-defined and functorial. (Of course,
similar statements hold for $\tld{W}(A,\sigma)$).

\begin{rem}
  In the case of a quaternion algebra $Q$ with its canonical involution $\gamma$,
  Lewis makes in \cite{Lew} a very similar construction to our $\tld{W}(Q,\gamma)$.
  His definition amounts to essentially the same, except that he uses the norm form
  of $Q$ instead of the involution trace form $T_\gamma$. Since these two forms differ
  by a factor $\fdiag{2}$, we get non-isomorphic but very similar rings. However,
  the norm form is a special feature of quaternion algebras (in general, for an
  algebra of degree $n$ the reduced norm is a homogeneous polynomial function
  of degree $n$), so the construction does not generalize well to arbitrary algebras.
  Furthermore, no proof of the associativity or commutativity of the product 
  is given in \cite{Lew}.
\end{rem}

We will call $GW_\pm(K)$ the \emph{even} part of $\tld{GW}(A,\sigma)$, and 
$GW_\pm(A,\sigma)$ its \emph{odd} part. This corresponds to considering the 
$\Zd$-grading canonically induced by the $\Gamma$-grading on the ring. Then the
functoriality of $\tld{GW}$ behaves very differently on the even and odd part:
for any $f: (B,\tau)\to (A,\sigma)$ in $\CBrh$, the induced $f_*: \tld{GW}(B,\tau) \to
\tld{GW}(A,\sigma)$ consists of the \emph{identity} on the even part, and of
the morphism described in (\ref{eq_induced}) on the odd part. 

For instance, any automorphism of $(A,\sigma)$ in $\CBrh$ induces a graded ring
automorphism on $\tld{GW}(A,\sigma)$; such an automorphism will be called
\emph{standard}. Recall from Proposition \ref{prop_autom} that all Morita automorphisms of
$(A,\sigma)$ are of the form $(|A|,\fdiag{\lambda}_\sigma)$ with $\lambda\in K^\times$,
so the associated standard automorphism of $\tld{GW}(A,\sigma)$ is the identity
on $GW_\pm(K)$, and is the multiplication of hermitian forms by the scalar $\lambda$
on $GW_\pm(A,\sigma)$.

Since all morphisms in $\CBrh$ are invertible, this means that all the induced
morphisms between mixed Witt rings are actually isomorphisms. In particular,
given some algebra with involution $(A,\sigma)$, for any other $(B,\tau)$ such
that $A$ and $B$ are Brauer-equivalent, there are isomorphisms $\tld{GW}(A,\sigma)
\approx \tld{GW}(B,\tau)$, well-defined up to a standard automorphism of 
$\tld{GW}(B,\tau)$. This means that we can translate any computation in $\tld{GW}(A,\sigma)$
to a computation in $\tld{GW}(B,\tau)$; the even component of the result does not depend 
on any choice of equivalence, while the odd part is well-defined up to multiplication 
by a scalar (which corresponds to a choice of equivalence). For instance, it is
often convenient to choose for $B$ a division algebra, so that we can reduce
all computations to the case of diagonal forms.

Of course everything we said in the last paragraphs also holds for $\tld{W}(A,\sigma)$
(for the same reasons, or by seeing it as a quotient of $\tld{GW}(A,\sigma)$).

\begin{ex}
  We have by construction $\tld{GW}(K,\Id) = GW^\pm(K) \oplus GW^\pm(K)$
  and $\tld{W}(K,\Id) = W(K) \oplus W(K)$ as $\Gamma$-graded groups (in that
  second case, the ring does have four components, but two of them are zero).
  It is easy to see by definition of the product that as $\Gamma$-graded
  rings, we have canonical isomorphisms $\tld{GW}(K,\Id)\simeq GW_\pm(K)[\Zd]$
  (where $R[G]$ denotes the group algebra of a group $G$ over a ring $R$)
  and $\tld{W}(K,\Id)\simeq W(K)[\Zd]$ (where $GW_\pm(K)$ and $W(K)$ are
  considered as $\mu_2$-graded rings).

  In particular, if $(A,\sigma)$ is a split algebra with orthogonal involution,
  then there is a ring isomorphism $\tld{W}(A,\sigma)\approx W(K)[\Zd]$, but
  it depends on the choice of a quadratic form $q$ such that $\sigma=\sigma_q$
  (which is only well-defined up to a scalar factor).
\end{ex}

\begin{rem}
  We can also consider the subrings $\tld{GW}_\eps(A,\sigma) = GW(K) \oplus 
  GW_\eps(A,\sigma)$ and $\tld{W}_\eps(A,\sigma) = W(K) \oplus W_\eps(A,\sigma)$
  for $\eps=\pm 1$, which are also functorial over $\CBrh$. The inconvenience
  is that in that case we only consider one sign of hermitian forms at a time,
  and that sign depends on the type of $\sigma$.
\end{rem}

\begin{rem}
  Note that we can associate to each $2$-torsion Brauer class $[A]$ the 
  isomorphism class of the ring $\tld{GW}(A,\sigma)$ where $\sigma$ 
  is any involution on $A$. This is well-defined, but is more or less
  unusable since to actually work with the ring we do need to choose 
  a representative $(A,\sigma)$.
\end{rem}

\subsection{The reduced dimension map}\label{sec_dim}

In the classical theory of quadratic forms, the dimension maps $GW(K)\to \Z$
and $W(K)\to \Zd$ play an important role (especially through their kernel).
We can generalize that to mixed Witt rings: to any $\eps$-hermitian module
$(V,h)$ over $(A,\sigma)$, we can associate its \emph{reduced dimension} 
$\rdim(V)$ (or $\rdim(h)$), defined as the degree of the algebra 
$\End_A(V)$. It can also be characterized as $\dim_K(V)/\deg(A)$ (see \cite{BOI}).

This defines a semi-group morphism $\rdim: SW_\eps(A,\sigma)\to \N$, which extends to
a group morphism $\rdim: GW_\eps(A,\sigma)\to \Z$. If we use these morphisms 
componentwise, we get a $\Gamma$-graded group morphism
\begin{equation}\label{eq_dim_gw}
  \tld{\rdim}: \tld{GW}(A,\sigma)\to \Z[\Gamma].
\end{equation}
Since hyperbolic forms always have an even reduced dimension, this also
induces a group morphism
\begin{equation}\label{eq_dim_w}
  \tld{\rdim}_2: \tld{W}(A,\sigma)\to \Zd[\Gamma].
\end{equation}

\begin{prop}\label{prop_dim}
  The morphisms (\ref{eq_dim_gw}) and (\ref{eq_dim_w}) are actually $\Gamma$-graded
  ring morphisms, and for any $f: (B,\tau)\to (A,\sigma)$ in $\CBrh$, we get
  a commutative diagram
  \[ \begin{tikzcd}
    \tld{GW}(B,\tau) \arrow[ddd] \drar{f_*} \arrow[rr, "\tld{\rdim}"] & & 
    \Z[\Gamma] \arrow[ddd] \\
    & \tld{GW}(A,\sigma) \dar \urar{\tld{\rdim}} & \\
    & \tld{W}(A,\sigma) \drar{\tld{\rdim}_2} & \\
    \tld{W}(B,\tau) \urar{f_*} \arrow[rr, "\tld{\rdim}_2"] & & \Zd[\Gamma].
  \end{tikzcd} \]
\end{prop}

\begin{proof}
  We need to show that $\rdim(V\otimes V') = \rdim(V)\dot \rdim(V')$, which is clear 
  since $\End_{A\otimes_K A'}(V\otimes_K V')$ is isomorphic to 
  $\End_A(V)\otimes_K \End_{A'}(V')$, and the degrees multiply.

  Then the fact that the diagram commutes relies on the fact that the reduced
  dimension is preserved by the morphisms $f_*: GW_\eps(B,\tau)\to GW_\eps(A,\sigma)$
  induced by hermitian Morita equivalences. But if $(V,h)\in SW_\eps(B,\tau)$,
  then by construction of $f_*$, $f_*(V,h)$ defines an equivalence $(\End_B(V),\tau_h)
  \mor{} (A,\sigma)$, so the degree of the algebra on the left-hand side is preserved
  in the operation (and that is the reduced dimension). 
\end{proof}

\subsection{Products of diagonal forms}\label{sec_diag}

Now that we have established the formal properties of our mixed rings, we would like to
be able to perform explicit computations. Obviously the only products which present
any difficulty are the products of two elements in $GW_\pm(A,\sigma)$. Actually,
since the elements in $GW_{-1}(K)$ are all hyperbolic and characterized by their 
(reduced) dimension,  and we know from Proposition \ref{prop_dim} how reduced dimensions 
multiply, the only non-trivial products to compute are those of two elements in
the same $GW_\eps(A,\sigma)$.

Furthermore, it is enough to know how to multiply elementary diagonal forms
$\fdiag{a}_\sigma$, since we know that if needed we can perform these computations in 
$\tld{GW}(D,\theta)$ where $D$ is the division algebra Brauer-equivalent to our
algebra $A$ (and $\theta$ is any involution). The result will even be independant
of the choice of equivalence between $(A,\sigma)$ and $(D,\theta)$ as it lies in
the even component.

In \cite[Â§11]{BOI}, given $a\in \Sym^\eps(A^\times,\sigma)$, a symmetric
bilinear form $T_{\sigma,a}: A\times A\to K$ is introduced, called a twisted involution 
trace form. We generalize that slightly by taking  $a,b\in \Sym^\eps(A^\times,\sigma)$ 
and defining $T_{\sigma,a,b}: A\times A\to K$ as
\begin{equation}
  T_{\sigma,a,b}(x,y) = \Trd_A(\sigma(x)ay\sigma(b)).
\end{equation}

We recover the inital $T_{\sigma,a}$ as $T_{\sigma,a,1}$, and also the usual
involution trace form $T_\sigma$ as $T_{\sigma,1,1}$. Then we get:

\begin{prop}\label{prop_prod_diag}
  Let $(A,\sigma)$ be an algebra with involution over $K$, and let
  $a,b\in \Sym^\eps(A^\times,\sigma)$. Then in $\tld{GW}(A,\sigma)$ we have
  \[ \fdiag{a}_\sigma \cdot \fdiag{b}_\sigma = T_{\sigma,a,b} \in GW(K). \]
\end{prop}

\begin{proof}
  By definition, $\fdiag{a}_\sigma \cdot\fdiag{b}_\sigma$ is the bilinear form given by 
  the composition $T_\sigma\circ \fdiag{a\otimes b}_{\sigma\otimes\sigma}$.
  According to Example \ref{ex_comp_diag}, this is
  \[ (x,y)\mapsto T_\sigma(x, (a\otimes b)\cdot y), \]
  where the action of $A\otimes_K A$ on $A$ is the twisted sandwich action (\ref{eq_twisted_action}).
  Since $T_\sigma(x,y)=\Trd_A(\sigma(x)y)$ and $(a\otimes b)\cdot y = ay\sigma(b)$,
  we may conclude.
\end{proof}

\begin{ex}\label{ex_carre_1}
  In particular, $\fdiag{1}_\sigma^2=T_\sigma$, which of course follows
  directly from the definition of the product. The idea that $T_\sigma$
  represents in some sense the ``square'' of the involution $\sigma$ has
  appeared in the literature in various forms, for instance in the definition
  of the signature of an involution (see \cite{AQM2}). Our construction gives
  some solid ground to this idea.
\end{ex}

\begin{coro}\label{cor_prod_trace}
  Let $(A,\sigma)$ be an algebra with involution over $K$, $V$ be a right
  $A$-module, and $h,h'$ be two $\eps$-hermitian forms on $V$. If we set
  $B=\End_A(V)$ and $\tau=\sigma_h$, then there is a unique $u\in B^\times$
  such that for all $x,y\in V$, $h'(x,y)=h(ux,y)$. Furthermore, $\tau(u)=u$, and
  \[ h'\cdot h = T_{\tau,u} \]
  as a product in $\tld{GW}(A,\sigma)$. In particular, $h^2=T_\tau$.
\end{coro}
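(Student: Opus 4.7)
\emph{Proof plan.} The strategy is to identify $h$ and $h'$ as images under the functor $\tld{SW}$ of diagonal forms over $(B,\tau)$, so that the product $h\cdot h'$ reduces via the functoriality Proposition \ref{prop_sw_fonct} and the diagonal computation Proposition \ref{prop_prod_diag}, bypassing any direct trace manipulation.

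First I would produce $a$. Since $h$ is regular, its adjoint $\hat h\colon V\to V^*$ is a right $A$-module isomorphism, so $\phi := \hat h^{-1}\circ\hat{h'}$ is an element of $B=\End_A(V)$, invertible because $h'$ is also regular. By construction $h'(x,y) = h(\phi(x),y)$, and the very definition of $\tau = \sigma_h$ rewrites this as $h'(x,y) = h(x,\tau(\phi)y)$; set $a := \tau(\phi)\in B^*$. Uniqueness of such an $a$ is immediate from non-degeneracy of $h$. To see $\tau(a) = a$, I would use the $\eps$-hermitian property of both forms to compute $h'(y,x)$ in two ways:
\[ h(y,ax) = h'(y,x) = \eps\sigma(h'(x,y)) = \eps\sigma(h(x,ay)) = h(ay,x) = h(y,\tau(a)x), \]
and non-degeneracy of $h$ then forces $a = \tau(a)$.

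Next I would view $f = (V,h)$ as a morphism $(B,\tau)\to(A,\sigma)$ in $\CBrh$. By construction $f_*(\fdiag{1}_\tau) = h$, and Example \ref{ex_action_diag} applied to the form $(x,y)\mapsto h(x,ay)$ gives $f_*(\fdiag{a}_\tau) = h'$. Since $f_*$ is a morphism of semi-rings (Proposition \ref{prop_sw_fonct}) acting as the identity on the $SW^\pm(K)$-component, Proposition \ref{prop_prod_diag} yields
\[ h\cdot h' = f_*(\fdiag{1}_\tau)\cdot f_*(\fdiag{a}_\tau) = f_*\bigl(\fdiag{1}_\tau\cdot\fdiag{a}_\tau\bigr) = f_*(T_{\tau,a}) = T_{\tau,a}. \]
The identity $h^2 = T_\tau$ is then the special case $h=h'$, where $a = 1$. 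The only genuinely non-formal step is the verification that $a$ is $\tau$-symmetric; every other piece is already packaged into the functoriality machinery, so no new explicit trace-form calculation is needed.
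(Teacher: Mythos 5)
Your proposal is correct and follows essentially the same route as the paper: define $a$ via $\hat h^{-1}\circ\hat{h'}$, deduce $\tau(a)=a$ from the $\eps$-hermitian symmetry of both forms, and then reduce the product to $\fdiag{1}_\tau\cdot\fdiag{a}_\tau=T_{\tau,a}$ using Example \ref{ex_action_diag} and the functoriality of $f_*$. Your extra care in distinguishing $\phi$ from $a=\tau(\phi)$ is a harmless (indeed slightly cleaner) refinement of the same argument.
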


\begin{proof}
  For a fixed $x$, $h'(x,-)$ is an $A$-linear map $V\to A$, so since $h$ is
  regular there exists a unique $x'$ such that $h'(x,-)=h(x',-)$. It is
  easy to see that $x\mapsto x'$ is $A$-linear, which shows existence and
  uniqueness of $u$ (which is invertible since $h'$ is also regular). We easily
  see that $\tau(u)=u$ using that $h'$ is $\eps$-hermitian.
  
  Let $f:(B,\tau)\to (A,\sigma)$ be the morphism in $\CBrh$ corresponding
  to $(V,h)$. Then $f_*(\fdiag{1}_\tau)=(V,h)$, and
  $f_*(\fdiag{b}_\tau)=h'$ (see Example \ref{ex_comp_diag}).
  Thus since $f_*$ is a ring morphism we find
  $h'\cdot h = \fdiag{u}_\tau\cdot \fdiag{1}_\tau = T_{\tau,u}$.
\end{proof}

This means that we can reinterpret twisted involution forms in the sense of
\cite[Â§11]{BOI} as being exactly the products of $\eps$-hermitian forms defined on 
the same module. The previous computations show that understanding the product in 
$\tld{GW}(A,\sigma)$ and $\tld{W}(A,\sigma)$ amounts to understanding those twisted 
involution trace forms (usually for involutions different from $\sigma$).

\subsection{Quaternion algebras}\label{sec_quater}

We can in particular perform these computations for quaternion algebras,
imitating the proof of \cite[11.6]{BOI}. Recall that if $Q$ is a quaternion
algebra, then its reduced norm map is a quadratic form on $Q$, denoted
$n_Q\in GW(K)$, and it is the unique 2-Pfister form whose Clifford invariant
$e_2(n_Q)\in H^2(K,\mu_2)$ is the Brauer class of $Q$.

For any pure quaternions $z_1,z_2\in Q$, the Brauer class $[Q]$ and
the symbol $(z_1^2,z_2^2)\in H^2(K,\mu_2)$ have a common slot (for instance $z_1^2$),
so $[Q]+(z_1^2,z_2^2)$ is a symbol. We write $\phi_{z_1,z_2}$ for the
unique 2-Pfister form whose Clifford invariant is this symbol.
In particular, if $z_1$ and $z_2$ anti-commute, $\phi_{z_1,z_2}$ is
hyperbolic.

\begin{prop}\label{prop_prod_quater}
  Let $(Q,\gamma)$ be a quaternion algebra over $K$ endowed with its
  canonical symplectic involution. Then for any $a,b\in K^*$ we have
  in $\tld{GW}(Q,\gamma)$:
  \[ \fdiag{a}_\gamma\cdot \fdiag{b}_\gamma = \fdiag{2ab}n_Q \in GW(K). \]
  Furthermore, for any pure quaternions $z_1,z_2\in Q^\times$, 
  $\fdiag{z_1}_\gamma\cdot \fdiag{z_2}_\gamma$ is similar to $\phi_{z_1,z_2}$.
  When $z_1$ and $z_2$ anti-commute, this means $\fdiag{z_1}_\gamma\cdot 
  \fdiag{z_2}_\gamma$ is hyperbolic. When they do not anti-commute, we get:
  \[ \fdiag{z_1}_\gamma\cdot \fdiag{z_2}_\gamma = 
  \fdiag{-\Trd_Q(z_1z_2)}\phi_{z_1,z_2} \in GW(K). \]            
\end{prop}

\begin{proof}
  From Proposition \ref{prop_prod_diag} we see that
  \[ \fdiag{a}_\gamma\cdot \fdiag{b}_\gamma = \fdiag{ab}T_\gamma, \]
  and it is easy to see that $T_\gamma = \fdiag{2}n_Q$ (for instance by
  taking a standard quaternionic basis of $Q$). For the second formula,
  we can check that if $z_1$ and $z_2$ anti-commute then $1$ and $z_1$ span
  a Lagrangian in $(Q,T_{\gamma,z_1,z_2})$, and otherwise we can check that
  if $z\in Q^\times$ anti-commutes with $z_1$, then the basis $(1,z_1,zz_2,z_1zz_2)$
  is orthogonal for $T_{\gamma,z_1,z_2}$, giving the diagonalization
  \[ \fdiag{-\Trd_Q(z_1z_2)}\fdiag{1,-z_1^2,-z_2^2z^2,z_1^2z_2^2z^2} \]
  which shows that $T_{\gamma,z_1,z_2}\simeq \fdiag{-\Trd_Q(z_1z_2)}\pfis{z_1,z_2z}$,
  and it is easy to see that by definition $\phi_{z_1,z_2}=\pfis{z_1,z_2z}$.
\end{proof}


Using this result, we can also compute products in $(Q,\sigma)$ for 
orthogonal involutions $\sigma$. Indeed, we know that $\sigma=\gamma_u$
for some pure quaternion $u\in Q^\times$; then we have a Morita equivalence
$f: (Q,\sigma)\mor{\fdiag{u}_\gamma} (Q,\gamma)$. From Example \ref{ex_comp_diag}
we deduce that $f_*(\fdiag{z}_\sigma) = \fdiag{uz}_\gamma$. So we can simply
compute $\fdiag{z}_\sigma\cdot \fdiag{z'}_\sigma = 
\fdiag{uz}_\gamma\cdot \fdiag{uz'}_\gamma$ using Proposition \ref{prop_prod_quater}
(it does not give especially enlightening formulas).

\begin{rem}
  In \cite{Lew}, Lewis gives a description in terms of generators and relations
  of $\tld{W}(Q,\gamma)$ in a few simple cases, namely when the base field is
  real closed or is a $p$-adic field (recall though that there is a factor $\fdiag{2}$
  between his product and ours). Unfortunately, it does not really seem feasible
  to give other such complete descriptions, simply because the underlying Witt
  groups themselves (let alone the ring structure) are to our knowledge only described 
  as precisely in those exact cases.
\end{rem}

\subsection{Scalar extension and reciprocity}\label{sec_recip}

Scalar extension is a standard tool in the theory of algebras with
involution, in particular when extending the scalars to a splitting
field to reduce to the classical theory of bilinear forms over fields.

When studying the usual Witt rings of fields, it is also standard to consider
the ring morphisms $\rho: GW(K)\to GW(L)$ induced by a field extension $L/K$. One
of the basic facts (see \cite[2.5.6]{Sch}) is that we can also go in the other 
direction provided the extension is finite: if $s:L\to K$ is a non-zero $K$-linear map, 
then one can define a group morphism
\[ \foncdef{s_*}{GW(L)}{GW(K)}{(V,b)}{(V,s\circ b),} \]
usually called a Scharlau transfer map,
and the Reciprocity Theorem states that this is a $GW(K)$-module morphism.
In concrete terms, this means that if $x\in GW(K)$ and $y\in GW(L)$ then
\[ s_*(\rho(x)\cdot y) = x\cdot s_*(y). \]

Of course this is reminiscent of other such reciprocity phenomena, such as
Frobenius reciprocity for induction/restriction of group representations, or
the projection formula for cup-products in cohomology.
\\

Let us explain how such reciprocity formulas arise inside the framework we 
established in Section \ref{sec_grad} to construct graded rings. Consider commutative monoids
$M$ and $N$, and $F,G\in \LaxHom_{\otimes}^s(\cat{M}, \mcat[\Z]_N)$. Then
if $\phi: F\to G$ is a monoidal transformation, it induces an $(M\times N)$-graded 
ring morphism $\phi_*: A_F\to A_G$, where $A_F$ and $A_G$ are the graded rings 
corresponding to $F$ and $G$. In particular, $A_G$ is a (graded) module over $A_F$.
Now if $\psi: G\to F$ is a natural transformation (not monoidal) in the other direction, 
it induces a graded group morphism $\psi_*: A_G\to A_F$. When is this an $A_F$-module
morphism? It is easy to check that the relevant condition is that the following
diagram commutes for all $x,y\in M$:
\begin{equation}\label{diag_recipr}
  \begin{tikzcd}
    F(x)\otimes G(y) \rar{\phi\otimes \Id} \dar{\Id\otimes \psi} 
    & G(x)\otimes G(y) \rar & G(xy) \dar{\psi} \\
    F(x)\otimes F(y) \arrow[rr] & & F(xy).
  \end{tikzcd}
\end{equation}

Now let us apply this to our situation. Let $L/K$ be any field extension.
We have an obvious scalar extension functor $\CBrh\to \CBrh[L]$, which fits in 
commutative diagrams of functors
\[ \begin{tikzcd}
  \mathbf{AlgInv}(K) \rar{\Theta} \dar & \CBrh \dar \\
  \mathbf{AlgInv}(L) \rar{\Theta}  & \mathbf{Br}_h(L)
\end{tikzcd} \]
and
\[ \begin{tikzcd}
  \CBrh \rar \dar & {\CBrh}[2] \dar \rar & \CBrh \dar \\
  \CBrh[L] \rar & {\CBrh[L]}[2] \rar & \CBrh[L]
\end{tikzcd} \]
using the canonical coherent $2$-torsion structures.

For any algebra with involution $(A,\sigma)$, scalar extension of hermitian
modules yields group morphisms
\begin{equation}
  \rho: GW_\eps(A,\sigma)\To GW_\eps(A_L,\sigma_L)
\end{equation}
which fit into a $2$-cell
\[ \begin{tikzcd}
  \CBrh \rar{GW_\pm} \dar & \mcat[\Z]_{\mu_2(K)} \arrow[d, equal] 
      \arrow[dl,Rightarrow, shorten <=0.6em, shorten >=0.6em, "\rho"] \\
  \CBrh[L] \rar[swap]{GW_\pm} & \mcat[\Z]_{\mu_2(L)}
\end{tikzcd} \]
In fact, as scalar extension is compatible with
tensor products, $\rho$ is even a monoidal natural transformation.
\\

Now let us assume that $L/K$ is finite, and let $s:L\to K$ be a non-zero
$K$-linear form. Then for any $K$-algebra $A$, $s$ extends naturally to a
$K$-linear map $s_A: A_L\to A$ by $\Id_A\otimes s$, and this defines an
\emph{involution trace} in the sense of \cite[4.3]{BOI}. This shows (see also
\cite[I.7.2,I.7.3.2]{Knu}) that we can define group morphisms, which are
the analogues of the Scharlau transfer,
\begin{equation}\label{eq_def_s}
  s_*: GW_\eps(A_L,\sigma_L)\To GW_\eps(A,\sigma)
\end{equation}
by sending $(V,h)$ over $(A_L,\sigma_L)$ to $(V,s_A\circ h)$.

\begin{prop}
  The morphisms in (\ref{eq_def_s}) form a natural transformation
  which fits into a $2$-cell
  \[ \begin{tikzcd}
    \CBrh \rar{GW_\pm} \dar & \mcat[\Z]_{\mu_2(K)} \arrow[d, equal]  \\
    \CBrh[L] \rar[swap]{GW_\pm} 
    \arrow[ur,Rightarrow, shorten <=0.6em, shorten >=0.6em, "s_*"] & \mcat[\Z]_{\mu_2(L)}.
  \end{tikzcd} \]
\end{prop}

\begin{proof}
  If $f: (B,\tau)\mor{(V,h)} (A,\sigma)$ is a morphism in $\CBrh$, we need
  to show that we get a commutative diagram
  \[ \begin{tikzcd}
    GW_\eps(B_L,\tau_L) \dar[swap]{(f_L)_*} \rar{s_*} & GW_\eps(B,\tau) \dar{f_*} \\
    GW_\eps(A_L,\sigma_L) \rar{s_*} & GW_\eps(A,\sigma).
  \end{tikzcd} \]
  Let $(U,g)\in GW_\eps(B_L,\tau_L)$. Then we need to give an isometry from
  $(U\otimes_B V,\alpha)$ to $(U\otimes_{B_L} V_L,\beta)$ with
  \[ \alpha(u\otimes v, u'\otimes v') = h(v, s_B(g(u,u'))\cdot v')  \]
  and
  \[ \beta(u\otimes (v\otimes \lambda), u'\otimes (v'\otimes \mu)) = 
    s_A(h_L(v\otimes \lambda, g(u,u')\cdot (v'\otimes \mu))).  \]
  We consider $\phi: U\otimes_B V\to U\otimes_{B_L} V_L$ defined by
  $u\otimes v\mapsto u\otimes (v\otimes 1)$; to show that it is an isometry from $\alpha$
  to $\beta$, we need to show the equality:
  \[ s_A(h_L(v\otimes 1, g(u,u')\cdot (v'\otimes 1))) = 
    h(v, s_B(g(u,u'))\cdot v'). \]
  We can write $g(u,u')=\sum b_i\otimes \lambda_i$, and since both expressions are
  additive in $g(u,u')$, we may actually assume that $g(u,u')=b\otimes \lambda$.
  But then the left-hand side is 
  \[ s_A(h_L(v\otimes 1,bv'\otimes \lambda)) = 
  s_A(h(v,bv')\otimes \lambda) = h(v,bv')s(\lambda)\]
  and the right-hand side is
  \[ h(v, s_B(b\otimes \lambda)\cdot v') = h(v,s(\lambda)bv')  \]
  which concludes.
\end{proof}

Thus we have at our disposal two natural transformations $\rho$ and $s_*$ going in 
opposite directions between the same functors, and one of them, namely $\rho$ is 
monoidal. The following proof is essentially an adaptation of \cite[2.5.6]{Sch}.

\begin{prop}\label{prop_recipr}
  The natural transformations $\rho$ and $s_*$ satisfy the relationship
  given by the diagram (\ref{diag_recipr}), with $F(A,\sigma) = GW_\pm(A,\sigma)$
  and $G(A,\sigma) = GW_\pm(A_L,\sigma_L)$.
\end{prop}

\begin{proof}
  We need to show that if $(V,h)\in GW_\eps(A,\sigma)$ and $(U,g)\in GW_{\eps'}(B_L,\tau_L)$
  then 
  \[ s_{A\otimes_K B}\circ (h_L\otimes g) = h\otimes (s_B\circ g) \in 
    GW_{\eps\eps'}(A\otimes_K B,\sigma\otimes \tau). \]
  It is easy to see that the map
  \[ \anonfoncdef{(V\otimes_K L)\otimes_L U}{V \otimes_K U}
    {(x\otimes \lambda)\otimes y}{x\otimes (\lambda y)} \]
  defines an isometry.
\end{proof}

We finally establish the desired reciprocity result:

\begin{thm}[Frobenius Reciprocity]\label{thm_recipr}
  Let $L/K$ be a finite field extension, and $s:L\to K$ a non-zero $K$-linear form.
  Then for any algebra with involution $(A,\sigma)$ over $(K,\Id)$, the scalar
  extension map $\rho: \tld{GW}(A,\sigma)\to \tld{GW}(A_L,\sigma_L)$ is a $\Gamma$-graded
  ring morphism, and the transfer $s_*: \tld{GW}(A_L,\sigma_L)\to \tld{GW}(A,\sigma)$
  is a morphism of graded $\tld{GW}(A,\sigma)$-modules.

  In practice, this means that for all $x\in \tld{GW}(A,\sigma)$ and 
  $y\in \tld{GW}(A_L,\sigma_L)$, we have:
  \[ s_*(\rho(x)\cdot y) = x\cdot s_*(y). \]
\end{thm}

\begin{proof}
  The canonical coherent $2$-torsion structures associate to each $(A,\sigma)$ 
  symmetric monoidal functors 
  \[ \begin{tikzcd}
    \cat{\Zd} \rar \drar & \CBrh \dar \\
    & \CBrh[L].
  \end{tikzcd} \]
  If we use the notation in the statement of Proposition \ref{prop_recipr},
  then by composition we get symmetric monoidal functors $F',G': \cat{\Zd}\to 
  \mcat[\Z]_{\mu_2(L)}$, and $\rho$ and $s_*$ give natural transformation
  $\phi: F'\to G'$ and $\psi: G'\to F'$ which satisfy the relation in 
  (\ref{diag_recipr}). By construction, the graded rings associated to $F'$ and $G'$ 
  are precisely $\tld{GW}(A,\sigma)$ and $\tld{GW}(A_L,\sigma_L)$, and the natural 
  transformations $\phi$ and $\psi$ correspond to the maps given by $\rho$ and $s_*$.

  Then as discussed above, the relation (\ref{diag_recipr}) exactly gives the
  fact that $\rho$ is a ring morphism and $s_*$ is a $\tld{GW}(A,\sigma)$-module
  morphism.
\end{proof}

Since both $\rho$ and $s_*$ send hyperbolic forms to hyperbolic forms, they also
induce maps $\rho: \tld{W}(A,\sigma)\to \tld{W}(A_L,\sigma_L)$ and 
$s_*: \tld{W}(A_L,\sigma_L)\to \tld{W}(A,\sigma)$, which also satisfy
the reciprocity formula, since it holds in $\tld{GW}$.

\begin{rem}\label{rem_trace_ideal}
  In particular, the image of the transfer map $s_*$ is an ideal in $\tld{GW}(A,\sigma)$
  (resp. $\tld{W}(A,\sigma)$), which as in the classical case we call the \emph{trace ideal}
  relative to $L/K$, and it can be shown that it does not depend on the choice of $s$.
\end{rem}

\bibliographystyle{plain}
\bibliography{witt_mixte}

\end{document}